\pgfplotsset{compat=1.14}
\newtheorem{theorem}{Theorem}[section]
\crefname{theorem}{Theorem}{Theorems}
\newtheorem{lemma}[theorem]{Lemma}
\crefname{lemma}{Lemma}{Lemmas}
\newtheorem{proposition}[theorem]{Proposition}
\crefname{proposition}{Proposition}{Propositions}
\newtheorem{corollary}[theorem]{Corollary}
\crefname{corollary}{Corollary}{Corollaries}
\theoremstyle{definition}
\crefname{defn}{Definition}{Definitions}
\newtheorem{example}[theorem]{Example}
\crefname{example}{example}{examples}
\newtheorem{rmk}[theorem]{Remark}
\crefname{rmk}{remark}{remarks}
\Crefname{rmk}{Remark}{Remarks}
\title{Growth on multiple interactive-essential resources in a self-cycling fermentor: An impulsive differential equations approach}
\author{Tyler Meadows \\
    Department of mathematics and statistics \\
    McMaster University\\
    Hamilton, Ontario, L8S 4L8\\
    \texttt{meadowta@mcmaster.ca}
    \And Gail S.K. Wolkowicz\\
    Department of mathematics and statistics\\
    McMaster University\\
    Hamilton, Ontario, L8S 4L8\\
    \texttt{wolkowic@mcmaster.ca}}
\newcommand{\IN}{{\rm{in}}}
\renewcommand{\vec}[1]{\mathbf{#1}}
\newcommand{\change}[1]{\color{black} #1 \color{black}}
\begin{document}
\maketitle
\begin{abstract}
        We introduce a model of the growth of a single microorganism in a self-cycling fermentor in which an arbitrary number of resources are limiting, and impulses are triggered when the concentration of one specific substrate reaches a predetermined level. The model is in the form of a system of impulsive differential equations. We consider the operation of the reactor to be successful if it cycles indefinitely without human intervention and derive conditions for this to occur. In this case, the system of impulsive differential equations has   a  periodic solution. We show that success is equivalent to the convergence of solutions to this periodic solution. We provide conditions that ensure that a periodic solution exists. When it exists, it is unique and attracting.  However, we also show that whether a solution converges to this periodic solution, and hence whether the model predicts that the reactor operates successfully, is  initial condition dependent. The analysis is illustrated with numerical examples.
\end{abstract}

\section{Introduction}

The self-cycling fermentation (SCF) process can be described as a sequential batch process and is an example of a hybrid system. In SCF, a tank is filled with a liquid medium that contains nutrients and microorganisms that use these nutrients to grow. The liquid medium is mixed to keep the concentrations uniform while the microorganisms feed on the nutrients and grow. If a predetermined decanting criterion is met, the tank is partially drained and subsequently refilled with fresh medium. Many different decanting criteria can be used to initiate the emptying/refilling sequence, such as elapsed time, a specific nutrient concentration, or a specific biomass concentration. For example, in \cite{Wincure1995}, a specific dissolved oxygen concentration was used as the decanting criterion. The goal was to choose the decanting criterion so that the fermentor would run indefinitely without operator input.

\change{Self-cycling fermentors and sequential batch reactors are often used to improve the efficiency of wastewater-treatment facilities \cite{Hughes1996,Sarkis1994}, to cultivate microorganisms \cite{Sauvageau2010}, to produce some biologically derived compounds \cite{Storms2012,Wang2017}, and as a method of producing bacteriophages for use in phage therapy \cite{Sauvageau2010a}. In particular, the self-cycling fermentation process has been suggested as an addition to the sidestream partial nitritation process in order to reduce the competition pressure on the beneficial Anammox bacteria \cite{Strous1998, laureni2016}.

Traditionally, the nitrification process is done in multiple stages; Ammonium ($\rm{NH_4^+}$) is converted  to nitrite ($\rm{NO_2^-}$) by ammonium oxidizing bacteria (AOB), nitrite is converted to nitrate ($\rm{NO_3^-}$) by nitrite oxidizing bacteria (NOB), and nitrate is converted to dinitrogen gas ($\rm{N_2}$) by denitrifying bacteria. Anammox bacteria offer a shortcut in which ammonium and nitrite are converted directly to dinitrogen gas. Each stage occurs in a continuous flow reactor. Unfortunately, Anammox is limited by both ammonium and nitrite and its growth is slow, allowing NOB to easily outcompete Anammox for nitrite. Self-cycling fermentation (in combination with biofilm cultivation) has been suggested as one way to tilt the competition in Anammox's favor \cite{laureni2016}.}

The decanting criterion can have a profound effect on the successful operation of the reactor. If the decanting criterion is too strict (e.g., complete removal of a resource), it may never be reached, and if it is too lenient (e.g., a small increase in biomass concentration), it may be reached too often. Many studies have modelled the growth of a single species with a single limiting resource with different decanting criteria, such as: threshold biomass concentrations \cite{Sun2011}; threshold nutrient concentrations \cite{Fan2007,Smith2001}; or after a certain time elapsed that depends on the nutrient concentrations after the previous decanting stage \cite{Cordova2014}. Under the assumption that the emptying/refilling process occurs on a much faster time scale than the other processes in the system, the system can be modelled using a system of impulsive differential equations. For a discussion on the qualitative theory of impulsive differential equations see \cite{Bauinov1995,Samoilenko1995}.

A more recent paper by Hsu et al. \cite{Hsu2019} investigated the dynamics of a model with two essential limiting nutrients in which the decanting criterion required both nutrient concentrations to reach or be below a prescribed threshold. When modelling with multiple resources, two resources are said to be \textit{essential} if the microorganism cannot grow without both resources. Conversely, two resources are said to be \textit{substitutable} if the presence of either resource is enough to promote growth. The different ways in which a species may respond to multiple limiting nutrients exist on a spectrum that was described in the book by Tilman \cite{Tilman1982}. \change{In particular, essential nutrients may be further refined into \emph{perfectly-essential} nutrients and \emph{interactive-essential} nutrients based on their respective growth isoclines. The growth iscolines of two perfectly-essential nutrients meet at a right angle, indicating that one resource may not be substituted for the other. The growth isoclines of interactive-essential nutrients have a curved corner, indicating that there is a small range of nutrient concentrations for which partial substitution is possible. }

In \cite{Hsu2019}, nutrient uptake of two essential resources was modelled using Liebig's law of the minimum \cite{Liebig1840}, where the growth is limited by the nutrient concentration that results in the slowest individual growth rate. Many more modern engineering papers do not use Liebig's law and instead model nutrient uptake for essential nutrients using the product of individual uptake functions \cite{Bader1978}. This may be problematic in the case when a large number of resources are growth limiting; the product of many uptake functions may predict much lower growth than what is actually observed if each uptake function is a small number. However, the product of uptake functions is advantageous because it is differentiable, whereas the minimum of uptake functions given by Liebig's law of the minimum is only Lipschitz continuous. 

Implementation of a self-cycling fermentor can be difficult. Online measurements can be expensive, and measuring quantities of interest may be impractical. Operators of these reactors will often choose to make easier measurements that act as a proxy for the quantities of true interest. For example, in \cite{Wincure1995}, the authors measured the dissolved oxygen concentration, since it was known to reach a minimum at the same time as the limiting substrate was exhausted. \change{ In \cite{laureni2016}, the ammonium concentration was used as a threshold, even though both ammonium and nitrite were growth limiting.} Alternatively, operators may not be aware that some nutrient concentrations are lower than required in the input medium, and, as a result, unanticipated resources may become limiting.

In this paper, we investigate the growth of a single microorganism with an arbitrary number of essential nutrients in a self-cycling fermentor. The decanting criterion is met when one specific tracked nutrient concentration falls below a prescribed threshold value. We model nutrient uptake using a general class of functions that includes both the product of uptake functions used in much of the engineering literature and the minimum of uptake functions preferred by biologists. In the case with a single limiting resource, this model reduces to that given in \cite{Smith2001}. In the case with two essential limiting resources and nutrient uptake modelled using Liebig's law of the minimum, this model is the same as the one in \cite{Hsu2019} where one threshold concentration is arbitrarily large. 

The paper is organized as follows. In \cref{Sec:Model}, we introduce the model and show that it is mathematically and biologically well-posed. In \cref{Sec:Periodic}, we provide conditions for the system to have a unique periodic solution and find the basin of attraction for the periodic solution. We show that if the initial conditions lie outside of the basin of attraction, then the population of microorganisms will eventually die out, and the reactor will fail. In \cref{Sec:Conclusions}, we summarize what we have learned, compare with similar models and discuss what implications this may have for operators of self-cycling reactors. \change{Our analysis is supplemented by several examples with parameters chosen to illustrate specific results.} 
\section{The Model}\label{Sec:Model} 

We model the self cycling fermentor using the system of impulsive differential equations
\begin{subequations}\label{eq:model}
\begin{align}
\begin{drcases}
    \dot{s}_i(t) &= -\frac{1}{y_i}F(\vec{s}(t))x(t),\quad i = 1,\dots,n\\
    \dot x(t) & = (-D +F(\vec{s}(t)))x(t)
\end{drcases} && \vec{s}(t_k^-)\notin \Gamma^-,\\
\begin{drcases}
    \vec{s}(t_k^+) &= r\vec{s}^\IN+(1-r)\vec{s}(t_k^-) \hphantom{i=1,\dots,n}\\
    x(t_k^+) &= (1-r)x(t_k^-)
    \end{drcases} && \vec{s}(t_k^-)\in \Gamma^-,\label{eq:Impulses}
\end{align}
\end{subequations}
where $\vec{s}(t)=(s_1(t),\dots,s_n(t))^T$.  Here, $s_i(t)$ denotes the concentration of the $i$th nutrient and $x(t)$  denotes the concentration of the biomass in the tank at time $t$.

The set $\Gamma^-$ is called the \emph{impulsive set}, and it represents the condition on $\vec{s}$ that triggers the emptying/refilling process. We consider the case where only one of the  nutrients is tracked by the operator and the tank is reset when the concentration of this nutrient reaches a prescribed threshold. Without loss of generality, we  label this nutrient  $s_1$ and denote the prescribed threshold by $\overline{s_1}$. Therefore, we define the impulsive set 
\begin{equation}\Gamma^- = \{\vec{s}\in \mathbb{R}^n_+ : s_1 = \overline{s_1}\}.\end{equation}
This is an $(n-1)$-dimensional hyperplane restricted to the positive cone, $\mathbb{R}^n_+=\{z\in\mathbb{R}^n: z_i >0~\text{for}~i=1,...,n\}$. For simplicity, we assume that $s_1(0)> \overline{s_1}$. The \emph{impulse times} are then the times $\{t_k\}$ such that $\vec{s}(t_k^-)\in \Gamma^-$, where $\vec{s}(t_k^-) = \lim_{t\to t_k^-} \vec{s}(t)$. 

The parameter $D$ is the decay rate (or maintenance coefficient) for the microorganism $x$, $\vec{s}^\IN = (s_1^\IN,\dots,s_n^\IN)^T$, where $s_i^\IN$ is the concentration of the $i$th nutrient in the fresh medium, $r\in(0,1)$ is the fraction of the tank that is decanted and subsequently refilled, and $y_i>0$,  $i=1,\dots,n$, are the yield coefficients for each nutrient.  

We assume $F:\mathbb{R}^n_+\to\mathbb{R}_+$ is a Lipschitz-continuous function satisfying $F(\vec{s})=0$ if $s_i = 0$ for any $i=1,...,n$, $F(\vec{s})>0$ if every $s_i>0$, and increasing in each of its arguments (i.e., $F(\vec{s}+\varepsilon \vec{e}_i)>F(\vec{s})$ for any $\varepsilon>0$, where $\vec{e}_i$ is the $i$th positive unit vector in $\mathbb{R}^n$).

This class of functions includes Liebig's minimum function,
\begin{equation} \label{eq:Liebig}
    F(\vec{s}) = \min\{f_i(s_i):~i=1,...,n\},
\end{equation}
as well as the product of functions
\begin{equation} \label{eq:product}
    F(\vec{s}) = \prod_{i=1}^n f_i(s_i), 
\end{equation}
where each $f_i(s_i)$ denotes the rate at which the microorganism uptakes the $i$th nutrient and are assumed to be increasing, \change{Lipshitz continuous functions.} In 
Tilman's classification of resource types \cite{Tilman1982}, Liebig's minimum function \eqref{eq:Liebig} describes perfectly-essential nutrients \change{(level sets are shown in figure \ref{fig:PerfectlyEssential})}, and the product of functions \eqref{eq:product} describes interactive-essential nutrients \change{(level sets are shown in figure \ref{fig:InteractiveEssential})}.
In the engineering literature, it is common to use the Monod growth function, $f_i(s_i) = \frac{\mu_is_i}{k_i+s_i}$ to describe the uptake of the $i$th nutrient.
\begin{figure}[t]
    \centering
    \subfloat[]{\begin{tikzpicture}[]
\begin{axis}[height = {38.1mm}, ylabel = {$s_2$}, xmin = {0}, xmax = {5}, ymax = {5}, xlabel = {$s_1$}, domain = 0:10, restrict y to domain = 0:20 = {unbounded coords=jump,scaled x ticks = false,xlabel style = {font = {\fontsize{11 pt}{14.3 pt}\selectfont}, color = {rgb,1:red,0.00000000;green,0.00000000;blue,0.00000000}, draw opacity = 1.0, rotate = 0.0},xmajorgrids = true,xtick = {0.0,1.0,2.0,3.0,4.0,5.0},xticklabels = {$0$,$1$,$2$,$3$,$4$,$5$},xtick align = inside,xticklabel style = {font = {\fontsize{8 pt}{10.4 pt}\selectfont}, color = {rgb,1:red,0.00000000;green,0.00000000;blue,0.00000000}, draw opacity = 1.0, rotate = 0.0},x grid style = {color = {rgb,1:red,0.00000000;green,0.00000000;blue,0.00000000},
draw opacity = 0.1,
line width = 0.5,
solid},axis x line* = left,x axis line style = {color = {rgb,1:red,0.00000000;green,0.00000000;blue,0.00000000},
draw opacity = 1.0,
line width = 1,
solid},scaled y ticks = false,ylabel style = {font = {\fontsize{11 pt}{14.3 pt}\selectfont}, color = {rgb,1:red,0.00000000;green,0.00000000;blue,0.00000000}, draw opacity = 1.0, rotate = 0.0},ymajorgrids = true,ytick = {0.0,1.0,2.0,3.0,4.0,5.0},yticklabels = {$0$,$1$,$2$,$3$,$4$,$5$},ytick align = inside,yticklabel style = {font = {\fontsize{8 pt}{10.4 pt}\selectfont}, color = {rgb,1:red,0.00000000;green,0.00000000;blue,0.00000000}, draw opacity = 1.0, rotate = 0.0},y grid style = {color = {rgb,1:red,0.00000000;green,0.00000000;blue,0.00000000},
draw opacity = 0.1,
line width = 0.5,
solid},axis y line* = left,y axis line style = {color = {rgb,1:red,0.00000000;green,0.00000000;blue,0.00000000},
draw opacity = 1.0,
line width = 1,
solid},    xshift = 0.0mm,
    yshift = 0.0mm,
    axis background/.style={fill={rgb,1:red,1.00000000;green,1.00000000;blue,1.00000000}}
,legend style = {color = {rgb,1:red,0.00000000;green,0.00000000;blue,0.00000000},
draw opacity = 1.0,
line width = 1,
solid,fill = {rgb,1:red,1.00000000;green,1.00000000;blue,1.00000000},fill opacity = 1.0,text opacity = 1.0,font = {\fontsize{8 pt}{10.4 pt}\selectfont}},colorbar style={title=}}, ymin = {0}, width = {76.2mm}]\addplot+ [color = {rgb,1:red,0.50196078;green,0.50196078;blue,0.50196078},
draw opacity = 1.0,
line width = 1.5,
solid,mark = none,
mark size = 2.0,
mark options = {
            color = {rgb,1:red,0.00000000;green,0.00000000;blue,0.00000000}, draw opacity = 1.0,
            fill = {rgb,1:red,0.50196078;green,0.50196078;blue,0.50196078}, fill opacity = 1.0,
            line width = 1,
            rotate = 0,
            solid
        },forget plot]coordinates {
(0.4, 5.0)
(0.4, 0.4)
(5.0, 0.4)
};
\addplot+ [color = {rgb,1:red,0.50196078;green,0.50196078;blue,0.50196078},
draw opacity = 1.0,
line width = 1.5,
solid,mark = none,
mark size = 2.0,
mark options = {
            color = {rgb,1:red,0.00000000;green,0.00000000;blue,0.00000000}, draw opacity = 1.0,
            fill = {rgb,1:red,0.50196078;green,0.50196078;blue,0.50196078}, fill opacity = 1.0,
            line width = 1,
            rotate = 0,
            solid
        },forget plot]coordinates {
(1.0, 5.0)
(1.0, 1.0)
(5.0, 1.0)
};
\addplot+ [color = {rgb,1:red,0.50196078;green,0.50196078;blue,0.50196078},
draw opacity = 1.0,
line width = 1.5,
solid,mark = none,
mark size = 2.0,
mark options = {
            color = {rgb,1:red,0.00000000;green,0.00000000;blue,0.00000000}, draw opacity = 1.0,
            fill = {rgb,1:red,0.50196078;green,0.50196078;blue,0.50196078}, fill opacity = 1.0,
            line width = 1,
            rotate = 0,
            solid
        },forget plot]coordinates {
(2.0, 5.0)
(2.0, 2.0)
(5.0, 2.0)
};
\addplot+ [color = {rgb,1:red,0.50196078;green,0.50196078;blue,0.50196078},
draw opacity = 1.0,
line width = 1.5,
solid,mark = none,
mark size = 2.0,
mark options = {
            color = {rgb,1:red,0.00000000;green,0.00000000;blue,0.00000000}, draw opacity = 1.0,
            fill = {rgb,1:red,0.50196078;green,0.50196078;blue,0.50196078}, fill opacity = 1.0,
            line width = 1,
            rotate = 0,
            solid
        },forget plot]coordinates {
(3.0, 5.0)
(3.0, 3.0)
(5.0, 3.0)
};
\end{axis}

\end{tikzpicture}\label{fig:PerfectlyEssential}}
    \subfloat[]{\input{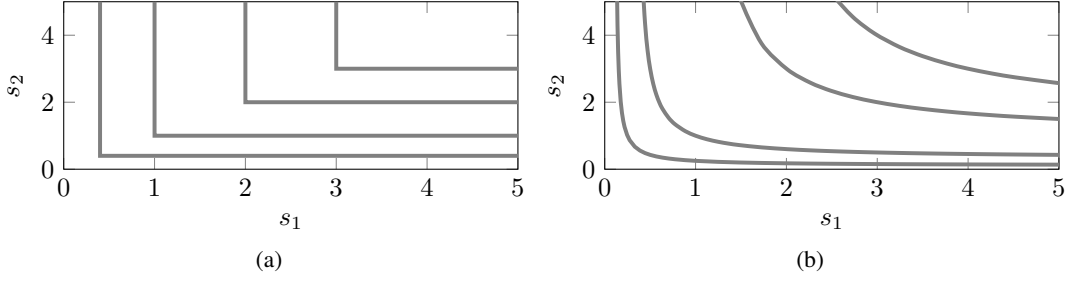}\label{fig:InteractiveEssential}}
    \caption{(a) Level sets of two perfectly-essential nutrients, as described by \cref{eq:Liebig}. (b) Level sets of two interactive-essential nutrients, as described by \cref{eq:product}. In both cases, $f_1= \frac{s_1}{1+s_1}$ and $f_2 = \frac{s_2}{1+s_2}$.}
    \label{fig:my_label}
\end{figure}

For $\vec{s}\notin \Gamma^-$ the system is governed by the system of ordinary differential equations,
\begin{subequations}\label{eq:ODES}
\begin{align}
    \dot{s}_i(t) &= -\frac{1}{y_i}F(\vec{s}(t))x(t),\quad i = 1,\dots,n, \label{eq:ODESa} \\
    \dot x(t) & = (-D +F(\vec{s}(t)))x(t). \label{eq:ODESb}
\end{align}
\end{subequations}

\begin{lemma}\label{lemma:odes} Solutions of \cref{eq:ODES} with initial conditions $(s_1(0),\dots,s_n(0),x(0)) \in \mathbb{R}_+^{n+1}$ are  bounded and satisfy $\vec{s}(t)\in\mathbb{R}_+^n$ for all $t\geq 0$. Furthermore, $x(t)\to0$ as $t\to\infty$.
\end{lemma}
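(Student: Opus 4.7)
The plan is to handle positivity first, then boundedness (which yields global existence), and finally the asymptotic decay of $x$, exploiting the Lipschitz regularity and boundary-vanishing properties of $F$.

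First, I would note that local existence and uniqueness follow from the Lipschitz continuity of the right-hand side of \cref{eq:ODES}. Positivity of $x$ is immediate from \cref{eq:ODESb} via $x(t) = x(0)\exp\!\bigl(\int_0^t(-D + F(\vec{s}(\sigma)))\,d\sigma\bigr)$. For each $s_i$, I would exploit the hypothesis that $F$ vanishes whenever any coordinate is zero, together with its Lipschitz constant $L$: for $\vec{s}$ in the closed nonnegative orthant, the identity $F(\vec{s}) = F(\vec{s}) - F(\vec{s}-s_i\vec{e}_i)$ combined with the Lipschitz estimate yields the pointwise bound $F(\vec{s}) \leq Ls_i$, since the two arguments differ only in the $i$-th coordinate by $s_i$. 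Substituting into \cref{eq:ODESa} gives $\dot s_i \geq -(L/y_i)x(t)s_i$, and a Gronwall-type comparison on any compact subinterval of existence then forces $s_i(t) \geq s_i(0)\exp\!\bigl(-(L/y_i)\int_0^t x(\sigma)\,d\sigma\bigr) > 0$. Thus the solution remains in $\mathbb{R}_+^n$ for all times on which it is defined.

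Next, for boundedness, I would introduce the auxiliary quantity $M(t) = x(t) + \sum_{i=1}^n y_i s_i(t)$. A direct computation using \cref{eq:ODES} gives $\dot M = -Dx - (n-1)F(\vec{s})x$, which is nonpositive once positivity has been established. Hence $M(t) \leq M(0)$ on the interval of existence, so $x(t) \leq M(0)$ and $s_i(t) \leq M(0)/y_i$. This rules out finite-time blow-up and extends the solution globally to $[0,\infty)$.

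Finally, since $\dot s_i \leq 0$ and $s_i \geq 0$, each $s_i(t)$ decreases monotonically to some limit $s_i^* \geq 0$. Integrating $\dot M \leq -Dx$ over $[0,\infty)$ yields $\int_0^\infty x(t)\,dt \leq M(0)/D < \infty$. The monotonicity of $F$ in each argument together with $s_i(t) \leq s_i(0)$ gives $F(\vec{s}(t)) \leq F(\vec{s}(0))$, so both $x$ and $\dot x = (-D + F(\vec{s}))x$ are bounded on $[0,\infty)$; therefore $x$ is uniformly continuous, and Barbalat's lemma delivers $x(t) \to 0$. I expect the main obstacle to be the positivity of each $s_i$: it hinges on turning the structural hypotheses on $F$ (Lipschitz continuity and vanishing on coordinate hyperplanes) into the linear bound $F(\vec{s}) \leq Ls_i$ that makes the Gronwall comparison possible, whereas the boundedness and decay steps are then nearly mechanical consequences of the conservation-like identity for $M$.
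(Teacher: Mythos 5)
Your proof is correct, but it takes a genuinely different route from the paper's. For positivity of $\vec{s}$, the paper simply observes that the coordinate faces are invariant and invokes uniqueness of solutions (Picard--Lindel\"of), whereas you extract the quantitative bound $F(\vec{s})\le L s_i$ from the Lipschitz constant and the vanishing of $F$ on the hyperplane $\{s_i=0\}$, and then run a Gronwall comparison to get the explicit lower bound $s_i(t)\ge s_i(0)\exp\bigl(-(L/y_i)\int_0^t x\bigr)$; this is sound (the standard first-hitting-time argument handles the technicality that the bound is needed only while the orbit stays in the closed orthant), and it has the advantage of giving an explicit rate at which $s_i$ can approach zero rather than a pure invariance statement. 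For boundedness and decay, the paper exploits monotonicity: each $s_i$ is nonincreasing, hence so is $F(\vec{s}(t))$, and a contradiction argument (if $F\ge D$ forever, then $x\ge x(0)$ forces $s_i\to-\infty$) produces a time $t_*$ after which $F(\vec{s}(t))<D$, yielding \emph{exponential} decay $x(t)\le x(t_*)e^{(F(\vec{s}(t_*))-D)(t-t_*)}$. You instead use the weighted mass $M=x+\sum_i y_i s_i$, whose derivative $\dot M=-Dx-(n-1)Fx\le -Dx$ gives uniform a priori bounds, global existence, and $x\in L^1(0,\infty)$, and then conclude $x\to 0$ via uniform continuity and Barbalat's lemma. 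Your route is cleaner on boundedness (the paper's bound on $x$ is only implicit) and does not need the monotonicity-plus-contradiction step, but it delivers only convergence $x\to 0$ without a rate, while the paper's argument buys eventual exponential decay; both establish the lemma as stated.
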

\begin{proof}
Noting that $F(\vec{s})=0$ if $s_i=0$ for any $i=1,\dots,n,$ the faces of $\mathbb{R}_+^{n+1}$ are invariant, i.e., if $s_i(t)=0$, then $\dot{s}_i(t)=0$ and if $x(t)$, then $\dot{x}(t)=0$ . Since the vector field in \eqref{eq:ODES} is Lipschitz, solutions to initial value problems are unique \change{by the Picard-Lindel\"of theorem.} Therefore, any solution with initial conditions in the interior of $\mathbb{R}_+^{n+1}$ is confined to the interior of $\mathbb{R}_+^{n+1}$, otherwise it would intersect the faces of $\mathbb{R}_+^{n+1}$. The right hand side of each nutrient equation is non-positive, and so the nutrient concentrations are nonincreasing, which implies that $F(\vec{s}(t))$ is a nonincreasing function of $t$.

If $x(0)>0$, then there exists $t_*\geq0$ such that $F(\vec{s}(t))<D$ for all $t\geq t_*$. If not, then $F(\vec{s}(t))\geq D$ for all $t$, and therefore 
\begin{align*}
    x'(t) =(F(\vec{s}(t))-D)x(t)\geq 0.
\end{align*}
Since $x(t)$ is nondecreasing, it follows that $x(t)\geq x(0)$ for all $t$. Therefore,
\begin{align*}
    s_i'(t) &\leq -\frac{1}{y_i} D x(0).
\end{align*}
This implies that $s_i(t) \leq s_i(0)-\frac{1}{y_i}Dx(0)t$ for all $t\geq 0$, and hence $s_i(t)\to -\infty$ as $t\to \infty$, a contradiction.

Therefore, there exists $t_*\geq0$ such that $F(\vec{s}(t_*))<D$ for all $t\geq t_*$.  This implies  that 
\begin{align*}
    x'(t) \leq (F(\vec{s}(t_*))-D)x(t)<0,
\end{align*}
for all $t\geq t_*$. Integrating gives 
\begin{align*}
x(t) \leq x(t_*) e^{(F(\vec{s}(t_*))-D)(t-t_*)}.
\end{align*}
Therefore, $x(t)\to 0$ as $t\to \infty$.
\end{proof}

Dividing the other nutrient equations in \eqref{eq:ODESa} by the equation for $s_1(t)$
(i.e., considering $\dot{s_i}/\dot{s}_1,~i=2,\dots,n$)  and integrating, it follows that the nutrient concentrations are linear functions of $s_1(t)$. In vector form, 
\begin{align}
    \vec{s}(t) &= \vec{s}^0 - y_1(s_1^0-s_1(t))\vec{Y},
\end{align}
where $\vec{Y} =(1/y_1,\dots,1/y_n)^T$ and $\vec{s}^0 = (s_1(0),\dots,s_n(0))^T$. Note that the equation for $s_1$ in this form is trivial. For positive initial conditions, $s_1(t)$ is strictly decreasing as a function of time, and so $s_1(t)$ is invertible, allowing us to write $t(s_1)$. \change{This means that there is a one-to-one correspondance between the time $t$ and $s_1$. In a sense this allows us to use the nutrient concentration $s_1$ to measure time.} With this in mind, we can write
\begin{subequations}\label{eq:Sols}
    \begin{align}
    \vec{s}(s_1)&= \vec{s}^0-y_1(s_1^0-s_1)\vec{Y},\\
    x(s_1) &=x^0-y_1\int_{s_1^0}^{s_1}\left( 1-\frac{D}{F(\vec{s}(\tau))}\right)d\tau,\label{eq:xnoimpulses}
\end{align}
\end{subequations}
where \change{$x^0$ is the initial biomass concentration} \change{and \eqref{eq:xnoimpulses} follows by dividing \eqref{eq:ODESa} by the $s_1$ version of \eqref{eq:ODESb} and integrating with respect to $s_1$.} \change{Note that the notation is consistent since $x(s_1^0)=x^0$.} If there exists $t_1$ such that $s_1(t_1^-) = \overline{s_1}$, then we can reparameterize \eqref{eq:Sols} using the percentage of $s_1$ consumed up to that point. Let $\nu(s_1) = (s_1^0-s_1)/(s^0_1-\overline{s_1}).$  \change{Substituting $\nu \in [0,1]$ into \eqref{eq:Sols} gives}
\begin{align*}
    \vec{s}(\nu) &= \vec{s}^0 - \nu y_1(s_1^0-\overline{s_1})\vec{Y},\\
    x(\nu) &=x^0+ y_1(s_1^0-\overline{s_1})\int_0^\nu\left( 1-\frac{D}{F(\vec{s}(\tau))}\right)d\tau.
\end{align*}
After the first impulse,  $s_1 \in [\overline{s_1},\overline{s_1}^+]$, where $\overline{s_1}^+ = rs_1^\IN +(1-r)\overline{s_1}$ is the image of $\overline{s_1}$ under the impulsive map. In general, for each $k\geq 1$ for which there exists $t_k^-$ such that $s_1(t_k^-) = \overline{s_1}$, we write
\begin{align}
    \varphi_\nu(\vec{s}^k) &= \vec{s}^k -\nu y_1(s_1^k-\overline{s_1})\vec{Y},\label{eq:S}\\
    u_\nu(\vec{s}^k,x^k) &= x^k +y_1(s_1^k-\overline{s_1})\int_0^\nu\left( 1-\frac{D}{F(\varphi_\tau(\vec{s}^k))}\right)d\tau,\label{eq:x}
\end{align}
with the understanding that $s_1^k = \overline{s_1}^+$.
In this notation,
$$ \varphi_0(\vec{s}^k) =\vec{s}^k = \vec{s}(t_k^+) \quad \mbox{and} \quad \varphi_1(\vec{s}^k) = \vec{s}(t_{k+1}^-).$$ 
$$ u_0(\vec{s}^k,x^k) =x^k =  x(t_k^+) \quad \mbox{and} \quad u_1(\vec{s}^k,x^k) = x(t_{k+1}^-).$$

First we prove that if there are an infinite number of impulses, then the reactor cycles indefinitely with finite cycle time. I.e., the phenomenon of beating is not possible for system \eqref{eq:model}. 
\begin{lemma}\label{lemma:beating} 
Assume that $(s_1(t),\dots,s_n(t),x(t))\in \mathbb{R}_+^{n+1}$ is a solution to \eqref{eq:model} with an infinite number of impulse times $\{t_k\}_{k=1}^\infty$. Then $\lim_{k\to\infty} t_k =\infty$. 
\end{lemma}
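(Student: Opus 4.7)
The plan is to show that the inter-impulse gaps $\Delta_k := t_{k+1}-t_k$ are bounded below by a single positive constant, from which $t_k \to \infty$ follows immediately. Between consecutive impulses $s_1$ decreases from $s_1(t_k^+) = \overline{s_1}^+$ to $s_1(t_{k+1}^-) = \overline{s_1}$, so integrating the $s_1$-equation in \eqref{eq:ODESa} over $[t_k, t_{k+1}]$ yields
\begin{equation*}
r(s_1^\IN - \overline{s_1}) \;=\; \int_{t_k}^{t_{k+1}} \frac{F(\vec s(t))\, x(t)}{y_1}\, dt.
\end{equation*}
A uniform upper bound $F(\vec s(t))\, x(t) \leq M$ would therefore give $\Delta_k \geq y_1 r(s_1^\IN - \overline{s_1})/M$, which is strictly positive because the existence of infinitely many impulses forces $s_1^\IN > \overline{s_1}$ (otherwise $\overline{s_1}^+ \leq \overline{s_1}$ and $s_1$ could not recross the threshold from above after a reset).

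Bounding $F$ uniformly is the easy half. By \eqref{eq:ODESa} each $s_i$ is non-increasing between impulses, and the impulsive update $s_i(t_k^+) = r s_i^\IN + (1-r) s_i(t_k^-)$ is a convex combination, so the compact box $B := \prod_{i=1}^n [0, \max(s_i(0), s_i^\IN)]$ is forward-invariant under the full hybrid dynamics. Continuity of $F$ on $B$ then yields $F(\vec s(t)) \leq F_{\max} < \infty$ uniformly in $t$.

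The main obstacle is the uniform bound on $x(t)$: between impulses $x$ may grow at rate up to $F_{\max}$, and since the cycle lengths are precisely what I am trying to bound, a direct Gr\"onwall-type estimate is circular. I would instead exploit the mass-balance identity $\tfrac{d}{dt}(x + y_1 s_1) = -Dx \leq 0$, obtained by adding $y_1$ times the $s_1$-equation to the $x$-equation in \eqref{eq:ODES}. Integrated over $[t_k, t_{k+1}]$ it gives
\begin{equation*}
x(t) \;\leq\; x(t_k^+) + y_1\bigl(s_1(t_k^+) - s_1(t)\bigr) \;\leq\; x(t_k^+) + C,
\end{equation*}
where $C := y_1 r(s_1^\IN - \overline{s_1})$. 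Combined with the impulsive contraction $x(t_{k+1}^+) = (1-r)\,x(t_{k+1}^-)$, this produces the linear recursion $x(t_{k+1}^+) \leq (1-r)\bigl(x(t_k^+) + C\bigr)$, whose attracting fixed point $(1-r)C/r$ supplies a uniform bound $X_{\max}$ on $\{x(t_k^+)\}_{k\geq 1}$, and hence on $x(t)$ across every cycle (a separate one-step estimate handles $[0, t_1]$). Setting $M := F_{\max}(X_{\max}+C)$ then closes the argument: $\Delta_k \geq y_1 r(s_1^\IN - \overline{s_1})/M$ for every $k$, so $t_k \geq t_1 + (k-1) M^{-1} y_1 r (s_1^\IN - \overline{s_1}) \to \infty$.
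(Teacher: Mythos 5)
Your proof is correct and takes essentially the same route as the paper's: both reduce the lemma to a uniform upper bound on $F(\vec{s}(t))x(t)$, bound the nutrients using the fact that each reset is a convex combination with $\vec{s}^\IN$, and bound the biomass via the same contraction $x(t_{k+1}^+)\leq(1-r)\bigl(x(t_k^+)+C\bigr)$, which yields a uniform positive lower bound on each cycle length. The only cosmetic differences are that the paper parameterizes time by $s_1$ and bounds the integrand in its explicit formula \eqref{eq:x}, whereas you integrate in $t$ and use the mass-balance identity $\frac{d}{dt}(x+y_1s_1)=-Dx\leq 0$; your explicit justification that infinitely many impulses force $s_1^\IN>\overline{s_1}$ is a point the paper leaves implicit.
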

\begin{proof} Since the $s_i$ are strictly decreasing, if $x(t)>0$, we can solve the $s_1$ equation in \cref{eq:model} for the time between impulses (i.e., consider $dt/ds_1$ and again use the substitution $\nu(s_1) = (s_1^0-s_1)/(s^0_1-\overline{s_1})$). After the first impulse, the time between impulses is given by
\begin{equation*}
    t_{k+1}-t_k = y_1(\overline{s_1}^+-\overline{s_1})\int_0^1\frac{1}{F(\varphi_\nu(\vec{s}^k))u_\nu(\vec{s}^k,x^k)}d\nu.
\end{equation*}
In order to show that the sequence $\{t_k\}_{k=1}^\infty$ has no accumulation point, it is enough to show that there exists $M>0$, independent of $k$, such that $F(\varphi_\nu(\vec{s}^k))u_\nu(\vec{s}^k,x^k)<M$. 
 For $\nu\in[0,1]$, each component of $\varphi_\nu(\vec{s}^k)$ is decreasing in $\nu$; i.e.,  
$$(\varphi_\nu)_i(\vec{s}^k) \leq(\varphi_0)_i(\vec{s}^k) =s_i^k$$
for $\nu\in[0,1]$, where $(\varphi_\nu)_i$ is the $i$th component of $\varphi_\nu$, $i>1$. By the relationship, $s_i^k = rs_i^\IN+(1-r)(\varphi_1)_i(\vec{s}^{k-1})$, for $i>1$, we obtain 
$$s_i^{k+1} \leq rs_i^\IN+(1-r)s_i^k.$$
Let $\{q_i^k\}_{k=0}^\infty$ be the sequence defined by $q_i^0 = s_i^0$, $q_i^{k+1} = rs_i^\IN+(1-r)q_i^k$. Then,  
\begin{equation}
    \limsup_{t\to\infty} s_i(t) \leq \lim_{k\to\infty}\sup_{\nu\in[0,1]}(\varphi_\nu)_i(\vec{s}^k)\leq \lim_{k\to \infty}q_i^k= s_i^\IN, 
\end{equation}
\noindent
and thus each $s_i(t)$ is bounded above. It remains to show that $x(t)$ is bounded.  
By \cref{eq:x}, there exists $M_0>0$ such that
\begin{equation*}
u_\nu(\vec{s}^k,x^k) \leq x^k+M_0, \quad \text{for all}~ \nu \in [0,1]. 
\end{equation*}
Using the relations $u_1(\vec{s}^k,x^k) =x(t_{k+1}^-)$ and $x^k = (1-r)x(t_k^-)$, it follows that
\begin{equation}
    \frac{1}{1-r}x^{k+1}=x{(t_{k+1}^-)} = u_1(s^k,x^k)\leq x^k+M_0
\end{equation}
and hence
\begin{equation}
    x^{k+1} \leq (1-r)(x^k+M_0).
\end{equation}
Consider the sequence $\{y_k\}_{k=0}^\infty$, defined by $y(0) = x^0$ and  $y_{k+1} = (1-r)(y_k+M_0)$, for $k=1,2,\dots.$ Then
\begin{equation*}
    \limsup_{t\to\infty}x(t) \leq \lim_{k\to\infty}\sup_{\nu \in [0,1]}u_\nu(\vec{s}^k,x^k)\leq \lim_{k\to\infty} y_k = \frac{(1-r)M_0}{r}. \qedhere
\end{equation*}
\end{proof}
\begin{corollary}\label{lemma:Bio_well_posed} 
Let $(s_1(t),\dots,s_n(t),x(t))\in \mathbb{R}_+^{n+1}$ be a solution of \eqref{eq:model}. Then, for all $t\geq 0$, the solution is bounded,  $s_i(t)>0, \ i=1,2,\dots,n,$ and $x(t)>0$.  
\end{corollary}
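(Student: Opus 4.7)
The plan is to combine \cref{lemma:odes}, which controls the dynamics on each inter-impulse interval, with the explicit form of the impulsive map in \eqref{eq:Impulses}, and then to harvest the uniform bounds that already appear inside the proof of \cref{lemma:beating} to conclude global boundedness. The corollary then follows by piecing together the inter-impulse behaviour with the jumps.

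For positivity, I would argue by induction on the impulse index $k$. On the first inter-impulse interval $[0, t_1)$ (or $[0,\infty)$ if no impulse occurs), the solution satisfies \eqref{eq:ODES} with data in the open positive cone, so by \cref{lemma:odes} it remains in $\mathbb{R}_+^{n+1}$ throughout. At $t=t_1$, the impulsive map gives
$s_i(t_1^+) = r s_i^{\IN} + (1-r)\, s_i(t_1^-) > 0$ and $x(t_1^+) = (1-r)\, x(t_1^-) > 0$,
since $r \in (0,1)$, $s_i^{\IN} > 0$, and the pre-impulse state is positive by what has just been shown. Reapplying \cref{lemma:odes} on $[t_1, t_2)$ and iterating yields $(\vec{s}(t), x(t)) \in \mathbb{R}_+^{n+1}$ for every $t \ge 0$.

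For boundedness, I would split into two cases according to whether the number of impulses is finite or infinite. If there are only finitely many impulses, then after the final impulse $t_N$ the dynamics are governed by \eqref{eq:ODES} with positive initial data, and \cref{lemma:odes} gives boundedness on $[t_N, \infty)$; boundedness on the compact interval $[0, t_N]$ is immediate since each inter-impulse ODE trajectory is bounded and the impulsive map is continuous. If there are infinitely many impulses, \cref{lemma:beating} ensures $t_k \to \infty$, and its proof already establishes the post-impulse bounds $s_i^k \le \max\{s_i^0, s_i^{\IN}\}$ (via the comparison sequence $q_i^k$) and $x^k \le \max\{x^0, (1-r)M_0/r\}$ (via the comparison sequence $y_k$).

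The only mild subtlety, and the place where one must be careful, is upgrading these discrete-time bounds at the sample points $\{t_k^\pm\}$ to a uniform bound on the continuous trajectory. This follows because on each inter-impulse interval $[t_k, t_{k+1})$ the components $s_i(t) = (\varphi_\nu)_i(\vec{s}^k)$ are monotone decreasing from $s_i^k$ by \eqref{eq:S}, and the biomass satisfies $x(t) = u_\nu(\vec{s}^k, x^k) \le x^k + M_0$ by the estimate used in \cref{lemma:beating} together with \eqref{eq:x}. Combining these two observations with the bounds on $s_i^k$ and $x^k$ yields uniform bounds on $\vec{s}(t)$ and $x(t)$ for all $t \ge 0$, completing the proof.
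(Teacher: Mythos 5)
Your proposal is correct and follows essentially the same route as the paper, which disposes of the corollary by citing the bounds established inside the proof of \cref{lemma:beating} and the positivity-preserving form of the impulse map \eqref{eq:Impulses}. The only difference is that you spell out what the paper leaves implicit --- the induction over impulse intervals via \cref{lemma:odes}, the finite-versus-infinite impulse dichotomy, and the upgrade from the discrete bounds on $(\vec{s}^k,x^k)$ to uniform bounds along the continuous trajectory --- all of which is sound.
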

\begin{proof}
That solutions to system \eqref{eq:model} are bounded was part of the proof of \cref{lemma:beating}. It is also clear that the impulse map leaves solutions positive. 
\end{proof}

\section{The Periodic Solution}\label{Sec:Periodic}
Define the component-wise Lyapunov-like function by 
\begin{equation}\label{eq:Lyapunov}
    V_i(\vec{s}) =(s_1^\IN-s_1)y_1-(s_i^\IN-s_i)y_i,\quad i=1,...,n.
\end{equation}
Each component, $V_i(\vec{s})$, can be seen as the signed distance from $\vec{s}$ to the line through $\vec{s}^\IN$ in the direction of $\vec{Y}$ when both are projected onto the $s_1$-$s_i$ plane. If $V_i(\vec{s})>0$, then $\vec{s}$ lies above the line through $\vec{s}^\IN$ in the $s_1$-$s_i$ plane, and if $V_i(\vec{s})<0$, then $\vec{s}$ lies below the line through $\vec{s}^\IN$ in the $s_1$-$s_i$ plane.  Note that $V_1(\vec{s})\equiv0$ and if $n=2$, then $V_2(\vec{s})$ is the same Lyapunov-type function used in \cite{Hsu2019}.

While each $V_i(\vec{s})$ is useful to determine the location of the projection of $\vec{s}$ in the $s_1$-$s_i$ plane, they are not convex functions, and therefore $\vec{V}(\vec{s})$ does not truly constitute a vector-Lyapunov function. On the other hand, the supremum norm, 
\begin{equation}
    \Vert \vec{V}(\vec{s})\Vert_\infty = \max \{\vert V_i(\vec{s})\vert : i = 1,\dots,n\},
\end{equation}
is convex and is therefore a candidate Lyapunov function.

\begin{lemma}\label{lemma:Lyapunov} 
Assume that $(s_1(t),\dots,s_n(t),x(t)) \in \mathbb{R}_+^{n+1}$ is a solution of \eqref{eq:model}. Let $t_0=0$ and $t_k$ be the $k$th impulse time, if it exists. Otherwise, set $t_k=\infty$. Then, for each $i=2,\dots, n$,
\begin{enumerate}
    \item $\frac{d}{dt}V_i(\vec{s}(t))=0$ for $t\in (t_k,t_{k+1})$. 
    \item $V_i(\vec{s}(t_{k}^+)) =(1-r) V_i(\vec{s}(t_k^-))$. 
\end{enumerate}
\end{lemma}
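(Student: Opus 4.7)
The plan is to prove both parts by direct calculation using the explicit form of $V_i$ and the definition of the system \eqref{eq:model}.

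For part 1, between impulses the dynamics are governed by \eqref{eq:ODES}, so I would simply differentiate $V_i(\vec{s}(t)) = (s_1^\IN - s_1(t))y_1 - (s_i^\IN - s_i(t))y_i$ along solutions. Applying the chain rule gives $\frac{d}{dt}V_i(\vec{s}(t)) = -y_1 \dot{s}_1(t) + y_i \dot{s}_i(t)$, and substituting $\dot{s}_j = -\frac{1}{y_j}F(\vec{s})x$ from \eqref{eq:ODESa} yields $F(\vec{s}(t))x(t) - F(\vec{s}(t))x(t) = 0$. Note the cancellation relies precisely on the fact that $y_j$ appears as a reciprocal in the $j$th nutrient equation; this is the structural reason $V_i$ is a conserved quantity between impulses.

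For part 2, I would use the impulsive map \eqref{eq:Impulses}, which gives $s_j(t_k^+) = r s_j^\IN + (1-r)s_j(t_k^-)$ for each component. Then
\begin{equation*}
    s_j^\IN - s_j(t_k^+) = s_j^\IN - r s_j^\IN - (1-r)s_j(t_k^-) = (1-r)(s_j^\IN - s_j(t_k^-)).
\end{equation*}
Applying this to $j=1$ and $j=i$ in the definition of $V_i$ factors a $(1-r)$ out of both terms and yields $V_i(\vec{s}(t_k^+)) = (1-r)V_i(\vec{s}(t_k^-))$.

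There is no real obstacle here — the lemma is a direct consequence of the construction of $V_i$ as a suitable linear combination of $s_1$ and $s_i$, designed precisely so that the uptake contributions cancel and the impulsive map acts as a uniform contraction. The only thing to be careful about is writing the two calculations cleanly with the correct signs, particularly that the minus sign in the definition of $V_i$ becomes a plus after differentiating the $(s_j^\IN - s_j)$ terms, and that the constants $s_j^\IN$ and $y_j$ drop out under differentiation in part 1.
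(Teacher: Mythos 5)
Your proposal is correct and matches the paper's proof essentially verbatim: part 1 by differentiating $V_i$ along solutions of \eqref{eq:ODES} and observing the cancellation of the $F(\vec{s})x$ terms, and part 2 by substituting the impulse map \eqref{eq:Impulses} and factoring out $(1-r)$. No gaps.
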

\begin{proof}
For each component of $\vec{V}$, 
\begin{align*}
    \frac{d}{dt}V_i(\vec{s}(t)) &= \frac{d}{dt}y_1(s_1^\IN-s_1(t)) - \frac{d}{dt}y_i(s_i^\IN-s_i(t)),\\
    &= -F(\vec{s}(t))x(t)+F(\vec{s}(t))x(t),\\ &= 0,
\end{align*}
and so $\frac{d}{dt}\max\{\vert V_i(\vec{s}(t))\vert:i=1,\dots,n\} = 0$.

When $t = t_k^+$, using \cref{eq:Impulses},
\begin{align*}
    V_i(\vec{s}(t_k^+)) &= y_1(s_1^\IN-s_1(t_k^+))-y_i(s_i^\IN-s_i(t_k^+)),\\
    &=y_1(s_1^\IN-rs_1^\IN-(1-r)s_1(t_k^-))-y_i(s_i^\IN-rs_i^\IN-(1-r) s_i(t_k^-)),\\
    &=(1-r)V_i(\vec{s}(t_k^-)).\qedhere
\end{align*}
\end{proof}

\begin{corollary}\label{corollary:periodic}
If $(s_1(t),\dots,s_n(t),x(t))\in\mathbb{R}_+^{n+1}$ is a solution to \cref{eq:model} with an infinite number of impulses, then $\vec{V}(\vec{s}(t)) \to \vec{V}(\vec{s}^\IN) = \vec{0}$ as $t\to \infty$. 
\end{corollary}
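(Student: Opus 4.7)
The plan is to show that $\vec{V}(\vec{s}(t))$ decays geometrically across impulses and is constant between them, which together with $t_k\to\infty$ forces convergence to $\vec{0}$. First I would verify the trivial algebraic identity $\vec{V}(\vec{s}^\IN)=\vec{0}$ by direct substitution into \eqref{eq:Lyapunov}: each component gives $y_1(s_1^\IN-s_1^\IN)-y_i(s_i^\IN-s_i^\IN)=0$, so the target of the limit is indeed the zero vector.

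Next, for each fixed $i\in\{2,\dots,n\}$, I would combine the two parts of \cref{lemma:Lyapunov} to derive an explicit formula for $V_i$ between consecutive impulses. Fix any $t\in(t_k,t_{k+1})$; by part~1 of \cref{lemma:Lyapunov}, $V_i(\vec{s}(t))=V_i(\vec{s}(t_k^+))$, and by iterating part~2 together with the fact that $V_i$ is constant on $(t_{j-1},t_j)$ for each $j\leq k$, I would obtain by induction
\begin{equation*}
V_i(\vec{s}(t))=(1-r)^k\, V_i(\vec{s}^0),\qquad t\in(t_k,t_{k+1}).
\end{equation*}
Since $r\in(0,1)$ forces $(1-r)^k\to 0$, the sequence of values taken by $V_i$ on successive inter-impulse intervals converges to $0$ geometrically.

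To upgrade this discrete statement to the continuous-time conclusion $V_i(\vec{s}(t))\to0$ as $t\to\infty$, I would invoke \cref{lemma:beating}, which guarantees that under the hypothesis of infinitely many impulses the sequence $\{t_k\}$ has no finite accumulation point, i.e.\ $t_k\to\infty$. Consequently, for every $t$ beyond $t_1$ there exists a unique $k=k(t)\geq 1$ with $t\in[t_k,t_{k+1})$ and $k(t)\to\infty$ as $t\to\infty$; applying the explicit formula above then yields $|V_i(\vec{s}(t))|\leq(1-r)^{k(t)}|V_i(\vec{s}^0)|\to 0$. Since $V_1\equiv 0$ trivially, and the bound holds componentwise, we conclude $\vec{V}(\vec{s}(t))\to\vec{0}=\vec{V}(\vec{s}^\IN)$.

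The main ``obstacle'' is only the transition from the discrete sequence $\{V_i(\vec{s}(t_k^+))\}$ to continuous-time convergence; this is handled cleanly by \cref{lemma:beating}, which is why it was proved first. All the real analytic content is already contained in the earlier two lemmas, so this result reduces to a one-line geometric-decay argument together with the no-beating property.
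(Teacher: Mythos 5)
Your proof is correct and follows essentially the same route as the paper: iterate the two parts of \cref{lemma:Lyapunov} to get $V_i(\vec{s}(t))=(1-r)^kV_i(\vec{s}^0)$ on each inter-impulse interval and conclude by geometric decay. In fact you are slightly more careful than the paper's one-line argument, since you explicitly invoke \cref{lemma:beating} to guarantee $t_k\to\infty$ when passing from the discrete sequence to the continuous-time limit, a step the paper leaves implicit.
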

\change{
\begin{proof}
From \cref{lemma:Lyapunov}, it follows that $V_i(\vec{s}(t_k^+))=(1-r)^kV(\vec{s}(t_0))$. Since $(1-r)<1$, $(1-r)^k\to0$ as $k\to\infty$, and thus each component of $\vec{V}(\vec{s})$ converges to $0$ as $t\to\infty$.
\end{proof}}
We can use the components of $\vec{V}(\vec{s})$ to partition $\mathbb{R}^n$ into two complementary pieces.  Define
$$\overline{V_i} = y_1(s_1^\IN-\overline{s_1})-y_is_i^\IN,$$ (i.e., $V_i(\vec{s})$ when $s_1=\overline{s_1}$ and $s_i = 0$), and
\begin{align*}
    \Omega_1 &= \{\vec{s} \in \mathbb{R}^n_+ :s_1\geq\overline{s_1},~  V_i(\vec{s}) > \overline{V_i},~\text{for all}~ i = 2,...,n\},\\
    \Omega_0 &= \{\vec{s} \in \mathbb{R}^n_+ :s_1\geq\overline{s_1},~  V_i(\vec{s}) < \overline{V_i},~\text{for at least one}~ i = 2,...,n\}.
\end{align*}

\begin{figure}
    \centering
    \includegraphics[width=0.8\textwidth]{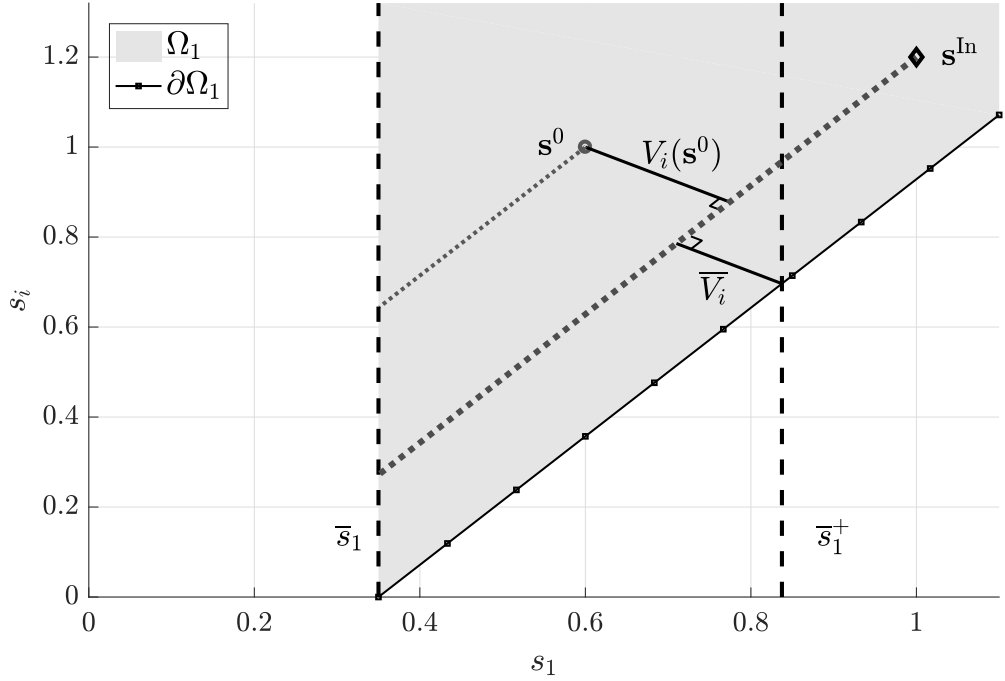}
    \caption{For any $\vec{s}^0$, $V_i(\vec{s}^0)$ is the length of the perpendicular line segment connecting $\vec{s}^0$ to the solution segment through $\vec{s}^\IN$ in the $s_1$-$s_i$ plane. For each $i$, $\overline{V_i}$ is the distance from $\partial\Omega_1$ to $\vec{s}^\IN$ in the $s_1$-$s_i$ plane.}
    \label{fig:Lyapunov2}
\end{figure}

\begin{lemma}\label{lemma:Omeganot} 
If $(s_1(t),\dots,s_n(t),x(t))\in \mathbb{R}_+^{n+1}$ is a solution of \eqref{eq:model} with $\vec{s}(0)\in \Omega_0$, then there are no impulses. 
\end{lemma}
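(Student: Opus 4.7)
The plan is a short proof by contradiction driven by the conservation of $V_i$ along the ODE flow established in \cref{lemma:Lyapunov}(1). Suppose on the contrary that at least one impulse occurs, and let $t_1>0$ be the first impulse time, so that $s_1(t_1^-)=\overline{s_1}$. On $[0,t_1)$ the dynamics reduce to the pure ODE system \eqref{eq:ODES}, so by \cref{lemma:odes} the coordinates $s_i(t)$ remain nonnegative throughout, and in particular $s_i(t_1^-)\geq 0$ for every $i$.

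Since $\vec{s}(0)\in\Omega_0$, I would fix an index $i\in\{2,\dots,n\}$ with $V_i(\vec{s}(0))<\overline{V_i}$. \Cref{lemma:Lyapunov}(1) gives that $V_i\circ\vec{s}$ is constant on $[0,t_1)$, and by continuity the same value is attained at $t_1^-$, so $V_i(\vec{s}(t_1^-))<\overline{V_i}$.

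The crux is then a one-line algebraic identity: substituting $s_1(t_1^-)=\overline{s_1}$ into the definition of $V_i$ and comparing with the definition of $\overline{V_i}$ yields
\[ V_i(\vec{s}(t_1^-))-\overline{V_i}=y_i\,s_i(t_1^-). \]
The strict inequality therefore forces $s_i(t_1^-)<0$, contradicting the positivity statement in \cref{lemma:odes}, so no impulse time exists. I do not anticipate a genuine obstacle; geometrically the argument just says that the straight-line trajectory of $\vec{s}(t)$ projected onto the $s_1$-$s_i$ plane (slope $y_1/y_i$) would have to exit the nonnegative cone through $s_i=0$ strictly before $s_1$ could drop to $\overline{s_1}$, and the positive invariance of the face $\{s_i=0\}$ from \cref{lemma:odes} rules out any such exit.
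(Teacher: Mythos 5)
Your argument is correct and is essentially the paper's own proof: both use the constancy of $V_i$ along the ODE flow from \cref{lemma:Lyapunov}, evaluate at the first impulse time where $s_1(t_1^-)=\overline{s_1}$, and conclude $s_i(t_1^-)<0$, contradicting positivity of the nutrient concentrations. The only cosmetic difference is that you cite \cref{lemma:odes} for positivity and keep a general index $i$, whereas the paper takes $i=2$ without loss of generality and cites \cref{lemma:Bio_well_posed}.
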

\begin{proof}
Without loss of generality, assume that $V_2(\vec{s}^0)<\overline{V}_2$. Suppose that the first impulse occurs at $t=t_1$; i.e., $s_1(t_1^-) = \overline{s_1}$. \change{By the first property of \cref{lemma:Lyapunov},} 
$$
    y_1(s_1^\IN-\overline{s_1})-y_2(s_2^\IN-s_2(t_1^-))=V_2(\vec{s}(t_1^-))=V_2(\vec{s}^0)<\overline{V}_2= y_1(s_1^\IN-\overline{s_1})-y_2s_2^\IN.
$$
 This implies $s_2(t_1^-)<0$, contradicting \cref{lemma:Bio_well_posed}, and so there are no impulses.
\end{proof}

\begin{lemma}\label{lemma:fail} 
If $\vec{s}^\IN \in \Omega_0$, then there are at most a finite number of impulses and $\lim_{t\to\infty} x(t) = 0$. 
\end{lemma}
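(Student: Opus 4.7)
My plan is to argue by contradiction. Assuming infinitely many impulses, I will show that after enough of them the state lands in $\Omega_0$, then invoke \cref{lemma:Omeganot}, restarted at that impulse time, to forbid any further impulses---a contradiction. Once finiteness of the impulse sequence is established, the conclusion $x(t)\to 0$ will follow by applying \cref{lemma:odes} to the (impulse-free) tail of the trajectory.

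The first step is to extract a usable sign condition from the hypothesis. Since $V_i(\vec{s}^\IN)=0$ for every $i$ directly from the definition \eqref{eq:Lyapunov}, the statement $\vec{s}^\IN\in\Omega_0$ is equivalent to having $s_1^\IN\geq\overline{s_1}$ together with $\overline{V_j}>0$ for at least one $j\in\{2,\dots,n\}$. This is the only place where the hypothesis is used, and it is the one slightly subtle step in the argument: it translates the geometric membership condition into a concrete sign on some $\overline{V_j}$ that the contraction from \cref{lemma:Lyapunov} can actually exploit.

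Fixing such a $j$ and supposing for contradiction that the impulse sequence $\{t_k\}$ is infinite, \cref{corollary:periodic} gives $V_j(\vec{s}(t))\to 0$ as $t\to\infty$. Since $\overline{V_j}>0$, there exists $K$ with $V_j(\vec{s}(t_K^+))<\overline{V_j}$. The impulse map at $t_K$ also yields
\[
s_1(t_K^+)=rs_1^\IN+(1-r)\overline{s_1}\geq\overline{s_1},
\]
using $s_1^\IN\geq\overline{s_1}$, so $\vec{s}(t_K^+)\in\Omega_0$. Restarting time at $t_K$ with initial condition $\vec{s}(t_K^+)$ and applying \cref{lemma:Omeganot} then rules out any subsequent impulse, contradicting the assumption that infinitely many impulses occur.

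Consequently only finitely many impulses can occur. After the last one (or for all $t\geq 0$ if there are no impulses at all) the system reduces to the smooth ODE \eqref{eq:ODES}, and \cref{lemma:odes} immediately gives $x(t)\to 0$. I do not foresee a serious obstacle: the whole argument hinges on the observation $V_i(\vec{s}^\IN)=0$, and once that sign condition on $\overline{V_j}$ is in hand, the rest is bookkeeping around the geometric contraction of $V_j$ at impulses.
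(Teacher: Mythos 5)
Your proof is correct and follows essentially the same route as the paper: assume infinitely many impulses, use \cref{corollary:periodic} to drive $V_j$ below $\overline{V_j}>0$ (which holds for some $j$ since $V_j(\vec{s}^\IN)=0$), conclude the post-impulse state lies in $\Omega_0$, and invoke \cref{lemma:Omeganot} for the contradiction, then finish with \cref{lemma:odes}. Your explicit check that $s_1(t_K^+)=rs_1^\IN+(1-r)\overline{s_1}\geq\overline{s_1}$ is a detail the paper leaves implicit, but it is the same argument.
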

\begin{proof}
Suppose not. Then there exists an infinite sequence of impulse times $\{t_k\}_{k=1}^{\infty}$. Since $\vec{s}^{\IN}\in\Omega_0$, it follows that $V_i(\vec{s}^\IN)=0<\overline{V_i}$ for at least one $i=2,...,n$. By \cref{corollary:periodic}, there exists $k\geq 0$ such that $V_i(\varphi_0(\vec{s}^k))<\overline{V_i}$. Therefore, $\varphi_0(\vec{s}^k)\in \Omega_0$, and by \cref{lemma:Omeganot}, no more impulses can occur. Thus, the remaining dynamics are governed by \cref{eq:ODES}. By \cref{lemma:odes},  $x(t)\to0$ as $t\to \infty$.
\end{proof}

\begin{rmk}
\change{Neither $\Omega_1$ nor $\Omega_0$ are closed sets in the subspace topology on $\{\vec{s}\in\mathbb{R}^n_+: s_1\geq \overline{s_1}\}$, which is the subset of $\mathbb{R}_n$ reachable by solutions. These sets are complementary in the sense that $\Omega_0\cup \overline{\Omega_1}=\{\vec{s}\in\mathbb{R}^n_+: s_1\geq \overline{s_1}\}$, and $\Omega_0\cap \overline{\Omega_1}=\varnothing$.} We are therefore missing the marginal case on their shared boundary, \begin{align*}
    \partial\Omega_1 &= \{\vec{s}\in\mathbb{R}_+^n: s_1\geq \overline{s_1}, ~ V_i(\vec{s})\geq \overline{V_i},~\text{for all}~i=2,...,n,\\&\quad\text{and} ~V_i(\vec{s})=\overline{V_i}~\text{for at least one}~i=2,\dots,n\}.
    \end{align*}
    While not covered here, it can be seen that if $\vec{s}^0\in\partial\Omega_1$, then there are no impulses. If $\vec{s}^\IN \in \partial\Omega_1$ and $\vec{s}^0\in\Omega_1$, then either finitely many impulses occur or there are infinitely many impulses but the time between impulses tends to infinity. 
\end{rmk}

In order to visualize solutions, we project them onto the $s_1$-$s_j$ plane, where $j$ is such that $\overline{V_j}=\max \{\overline{V_i}:i=2,\dots,n\}$ . This allows us to see clearly whether $\vec{s}^\IN \in \Omega_0$ or  $\vec{s}^\IN\in\Omega_1$, since if $\vec{s}^\IN\in\Omega_0$, then at least one $\overline{V_i}>0$.
\begin{example}\label{example1}
Consider \eqref{eq:model} with $n=3$, 
\begin{equation*}
    F(\vec{s}) = \min\left\{\frac{0.4s_1}{0.25+s_1},\frac{1.3s_2}{0.3+s_2},\frac{0.5s_3}{0.5+s_3}\right\},
\end{equation*}
\sloppy $r = 0.7$, $\vec{Y}=(1.00,0.83,1.25)^T$,
         $\overline{s_1}=0.4$,
         $D = 0.05$ and $\vec{s}^\IN=(1,1,0.6)^T$. Using its definition, we compute $\overline{\vec{V}} = (0,-0.20,0.52)^T$. Since $\overline{V_3}=\max\{\overline{V_i}:i=2,3\}$, we project solutions onto the $s_1$-$s_3$ plane and easily see that $\vec{s}^\IN\in\Omega_0$.  The initial conditions, $\vec{s}^0=(0.6,0.7,0.8)^T,~x^0 = 0.5$ satisfy $\vec{s}^0\in\Omega_1$, yet the conditions for \cref{lemma:Omeganot} are satisfied, and so, as predicted, in \cref{fig:Omega0}, we see  that $x(t)\to 0$ as $t\to\infty$.
\begin{figure}[h]
    \centering
    \includegraphics[width=\textwidth]{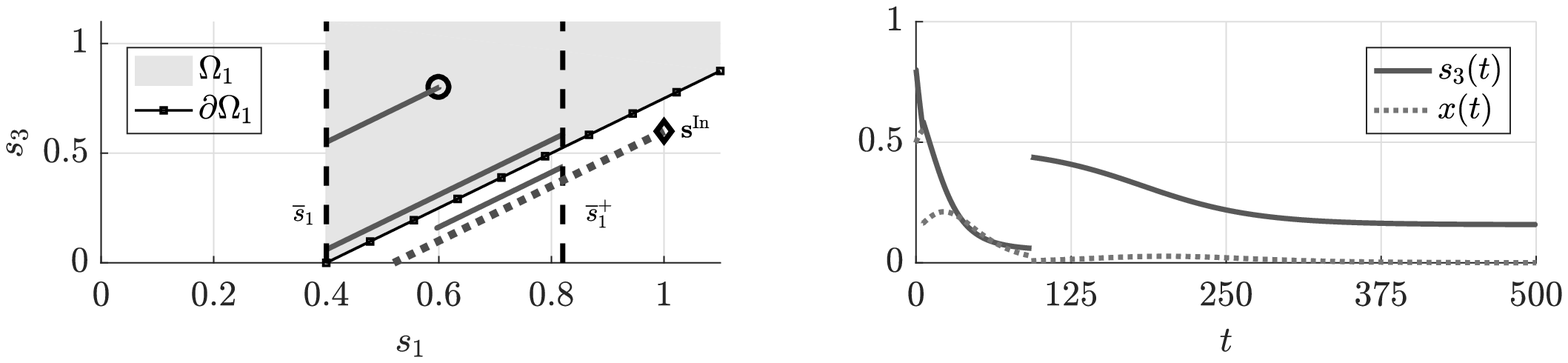}
    \caption{The dynamics of \cref{example1} illustrated by projecting orbits onto $s_1$-$s_3$ space, with the line through $\vec{s}^\IN$ shown in dotted red on the left. Solutions of $s_3$ and $x$ as functions of time are shown on the right. As predicted by \cref{lemma:Omeganot},  only finitely many impulses occur and $x(t)\to 0$ as $t\to\infty$.}
    \label{fig:Omega0}
\end{figure}
 
\end{example}

If $\vec{s}^\IN\in\Omega_1$, then each component of $\varphi_1(\vec{s}^\IN)$ is positive. We define $\widehat{\vec{s}}^+$ to be the point on $\varphi_\nu(\vec{s}^\IN)$ with $s_1=\overline{s_1}^+$, i.e., for fixed $r\in(0,1)$
\begin{equation*}
    \widehat{\vec{s}}^+:=\widehat{\vec{s}}^+(r) = \vec{s}^\IN-(1-r)y_1(s_1^\IN-\overline{s_1})\vec{Y}\change{= \varphi_{(1-r)}(\vec{s}^\IN)},
\end{equation*}
and define 
\begin{equation}\label{eq:mu_alt}
    \mu(r) = y_1(\overline{s_1}^+-\overline{s_1})\int_0^1\left(1-\frac{D}{F(\varphi_\nu(\widehat{\vec{s}}^+))}\right)d\nu
\end{equation}
to be the change in $x$ as $\vec{s}$ changes from $\widehat{\vec{s}}^+$ to $\widehat{\vec{s}}=\varphi_1(\widehat{\vec{s}}^+)$. Note that \change{since $\widehat{\vec{s}}^+ =\varphi_{(1-r)}(\vec{s}^\IN)$, $\widehat{\vec{s}}^+$ and $\vec{s}^\IN$ lie on the same solution segment.} Thus, by \cref{lemma:Lyapunov}, $V_i(\widehat{\vec{s}}^+)=V_i(\vec{s}^\IN) = 0$ for all $i=1,...,n$ and for all $r\in(0,1)$. Since $\overline{s_1}^+ = rs_1^\IN +(1-r)\overline{s_1}$, an equivalent representation of \eqref{eq:mu_alt} is
\begin{equation}\label{eq:mu}
    \mu(r) = ry_1(s_1^\IN-\overline{s_1})\int_0^1\left(1-\frac{D}{F(\varphi_\nu(\widehat{\vec{s}}^+))}\right)d\nu.
\end{equation}

\begin{theorem}\label{theorem:Periodic} 
Assume $\vec{s}^\IN \in \Omega_1$. If $r\in(0,1)$ and $\mu(r)>0$, then system \eqref{eq:model} has a unique periodic solution that has one impulse per period. On a periodic solution, $x(t_k^+) = \frac{(1-r)}{r}\mu(r)$ and $x(t_k^-) = \frac{1}{r}\mu(r)$ for all $k\in \mathbb{N}$.\\
\indent If $\mu(r)\leq 0$, then system \eqref{eq:model} has no periodic solutions.
\end{theorem}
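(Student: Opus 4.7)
The plan is to reduce the search for periodic solutions to a fixed-point problem on the post-impulse state. Suppose that \eqref{eq:model} admits a periodic solution with $k\geq 1$ impulses per minimal period, at impulse times $t_1<\cdots<t_k$. By \cref{lemma:Lyapunov}, each $V_j(\vec{s}(t))$ is constant between impulses and is multiplied by $(1-r)$ at each impulse, so iterating once around a full period gives $V_j(\vec{s}(t_1^+)) = (1-r)^k V_j(\vec{s}(t_1^+))$ for every $j=2,\ldots,n$. Since $1-r\in(0,1)$, this forces $V_j(\vec{s}(t_m^+))=0$ for every $m$, and, combined with $s_1(t_m^+)=\overline{s_1}^+$, each post-impulse $\vec{s}$-state must coincide with the unique point $\widehat{\vec{s}}^+$ of the line $\{\varphi_\nu(\vec{s}^\IN):\nu\in\mathbb{R}\}$ having first coordinate $\overline{s_1}^+$.

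Next I would analyze the biomass. Because every post-impulse $\vec{s}$-state equals $\widehat{\vec{s}}^+$, the inter-impulse dynamics are identical, and by \eqref{eq:x} and \eqref{eq:mu_alt} the pre-impulse biomass at $t_{m+1}^-$ is $x^m+\mu(r)$, where $x^m:=x(t_m^+)$. Combined with the impulse rule this yields the scalar linear recurrence
\begin{equation*}
x^{m+1} = (1-r)\bigl(x^m + \mu(r)\bigr),
\end{equation*}
which is a strict contraction of slope $1-r$ with unique fixed point $x^* = (1-r)\mu(r)/r$. Any periodic orbit of the recurrence must therefore be constant, so $x^m\equiv x^*$; the minimal period contains exactly one impulse and $x(t_m^-)=x^*/(1-r)=\mu(r)/r$.

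It remains to verify existence and non-existence. When $\mu(r)>0$, I would check that $(\widehat{\vec{s}}^+,x^*)$ actually generates a legitimate periodic orbit. The assumption $\vec{s}^\IN\in\Omega_1$ gives $\overline{V_j}<0$ for each $j\geq 2$, hence $V_j(\widehat{\vec{s}}^+)=0>\overline{V_j}$, so $\widehat{\vec{s}}^+\in\Omega_1$, and the components of $\widehat{\vec{s}}=\varphi_1(\widehat{\vec{s}}^+)$ are positive; \cref{lemma:beating} then guarantees that the trajectory starting at $(\widehat{\vec{s}}^+,x^*)$ reaches $\{s_1=\overline{s_1}\}$ in finite time, and applying the impulse returns it to $(\widehat{\vec{s}}^+,x^*)$ by construction. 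When $\mu(r)\leq 0$, the only candidate fixed point satisfies $x^*\leq 0$, which contradicts the strict positivity of biomass from \cref{lemma:Bio_well_posed}, so no periodic solution exists.

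The main conceptual obstacle is excluding periodic orbits with more than one impulse per minimal period. This is handled cleanly by the combination of \cref{lemma:Lyapunov} (which forces every post-impulse $\vec{s}$-state to coincide with $\widehat{\vec{s}}^+$) and the contraction property of the resulting scalar biomass map; everything else reduces to a short fixed-point computation.
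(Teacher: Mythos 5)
Your argument mirrors the paper's proof: the quantities $V_i$ force the resource component of any periodic orbit onto the solution segment through $\vec{s}^\IN$ with post-impulse state $\widehat{\vec{s}}^+$, the biomass then satisfies the affine recurrence $x^{m+1}=(1-r)\bigl(x^m+\mu(r)\bigr)$ whose only periodic sequence is its fixed point $\tfrac{1-r}{r}\mu(r)$ (the paper argues via strict monotonicity where you invoke the contraction property), existence for $\mu(r)>0$ is verified on the candidate orbit, and non-existence for $\mu(r)\le 0$ follows from positivity of $x$. One cosmetic point: \cref{lemma:beating} only rules out accumulation of impulse times and does not itself guarantee that the orbit from $(\widehat{\vec{s}}^+,x^*)$ reaches $\Gamma^-$ in finite time, but that step is left equally implicit in the paper, so your proof is correct and essentially the same.
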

\begin{proof}
First we show that if \cref{eq:model} has a periodic solution, then it is unique.

Assume that \cref{eq:model} has a periodic solution. From \cref{corollary:periodic}, the projection of the periodic solution onto the resource hyperplane has to lie on $\varphi_\nu(\widehat{\vec{s}}^+)$.  Since system \eqref{eq:ODES} has no cycles, there is at least one impulse, and, by periodicity, there are an infinite number of impulses. Denote by $K$ the number of impulses in each period. Then $u_\nu(\vec{s}^{K+k},x^{K+k})=u_\nu(\vec{s}^k,x^k)$ for every $\nu\in[0,1]$, $k\in\mathbb{N}$. By \eqref{eq:Impulses} and combining \eqref{eq:x} with \eqref{eq:mu_alt},  
\begin{align*}
    u_1(\vec{s}^k,x^k) = u_0(\vec{s}^k,x^k)+\mu(r), && x^{k+1}=(1-r)u_1(\vec{s}^k,x^k),
\end{align*}
and therefore, using the relation $u_0(\vec{s}^k,x^k) = x^k$,
\begin{align*}
 x^{k+1} = (1-r)(x^k +\mu(r)).
\end{align*}
If $x^{k+1}>x^k$, then we can show inductively that $\{x^k\}_{k=0}^\infty$ is a strictly increasing sequence. Similarly, if $x^{k+1}<x^k$ we can show that $\{x^k\}_{k=0}^\infty$ is a strictly decreasing sequence. Therefore, if there is a periodic orbit, it is unique up to time translation and satisfies $K=1$, $u_0(\vec{s}^k,x^k) = x^k = \frac{1-r}{r}\mu(r)$, and $u_1(\vec{s}^k,x^k)=\frac{1}{r}\mu(r)$ for all $k\in\mathbb{N}$.

If $\vec{s}^\IN\in\Omega_1$ and $\mu(r)>0$, then the solution with  $(\vec{s}^0,x^0)=(\widehat{\vec{s}}^+,\frac{1-r}{r}\mu(r))$ is periodic, since $\varphi_1(\widehat{\vec{s}}^+) =\widehat{\vec{s}}$ and $u_1\left(\widehat{\vec{s}}^+,\frac{1-r}{r}\mu(r)\right)=\frac{1}{r}\mu(r)$.

If $\mu(r)\leq0$, then by the uniqueness of periodic solutions and \cref{lemma:Bio_well_posed}, \cref{eq:model} has no periodic solutions.
\end{proof}

\begin{proposition}\label{proposition:rstar} 
If $\mu(1)>0$, then there exists a unique $r^*\in[0,1)$ such that $\mu(r)>0$ for all $r\in(r^*,1]$ and $\mu(r)\leq0$ for all $r\in[0,r^*]$. 
\end{proposition}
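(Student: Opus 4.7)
The plan is to factor
\[\mu(r) = r\, y_1(s_1^\IN - \overline{s_1})\, I(r), \qquad I(r) := \int_0^1 \left(1 - \frac{D}{F(\varphi_\nu(\widehat{\vec{s}}^+(r)))}\right) d\nu,\]
and argue that $I$ is continuous and strictly increasing on $[0,1]$. Since $y_1(s_1^\IN - \overline{s_1}) > 0$, the sign of $\mu(r)$ on $(0,1]$ coincides with the sign of $I(r)$, while $\mu(0)=0$. Once monotonicity and continuity of $I$ are established, the proposition is a direct consequence of the intermediate value theorem.

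The main computation is to unfold the composition. Substituting $\widehat{\vec{s}}^+(r) = \vec{s}^\IN - (1-r) y_1 (s_1^\IN - \overline{s_1})\vec{Y}$ into $\varphi_\nu$ and using the identity $(\widehat{\vec{s}}^+(r))_1 - \overline{s_1} = r(s_1^\IN - \overline{s_1})$ yields
\[\varphi_\nu(\widehat{\vec{s}}^+(r)) = \vec{s}^\IN - \bigl(1 - (1-\nu)r\bigr)\, y_1 (s_1^\IN - \overline{s_1})\, \vec{Y}.\]
Thus $r$ enters the argument of $F$ only through the scalar factor $1 - (1-\nu)r$, which is strictly decreasing in $r$ for every $\nu \in [0,1)$. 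Because every component of $\vec{Y}$ is positive and $F$ is strictly increasing in each coordinate, $F(\varphi_\nu(\widehat{\vec{s}}^+(r)))$ is strictly increasing in $r$ for each $\nu \in [0,1)$, and so is the integrand $1-D/F$. Integrating over $\nu \in [0,1]$ (a set of positive measure on which the inequality is strict) gives strict monotonicity of $I$. Continuity of $I$ follows from continuity of $F$ together with the fact that $F$ is bounded below by a positive constant on the compact segment from $\varphi_1(\vec{s}^\IN)$ to $\vec{s}^\IN$, which lies in the open positive orthant by the implicit assumption $\vec{s}^\IN \in \Omega_1$ underlying the definition of $\mu$.

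From the hypothesis $\mu(1) > 0$, or equivalently $I(1) > 0$, I split into two cases. If $I(0) \geq 0$, strict monotonicity forces $I(r) > 0$ on $(0,1]$, giving $\mu > 0$ on $(0,1]$ and $\mu(0)=0$, so $r^* = 0$ works. If $I(0) < 0$, the intermediate value theorem together with strict monotonicity produces a unique $r^* \in (0,1)$ with $I(r^*) = 0$, and the sign of $\mu$ flips exactly once at $r^*$, being nonpositive on $[0,r^*]$ and positive on $(r^*, 1]$. The main obstacle is the monotonicity reduction above; isolating $1 - (1-\nu)r$ as the only site through which $r$ enters the integrand collapses the two-parameter dependence into a single clean comparison, after which everything reduces to the intermediate value theorem.
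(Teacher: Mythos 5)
Your proposal is correct, and its engine is the same as the paper's: the decisive step in both arguments is unfolding $\varphi_\nu(\widehat{\vec{s}}^+(r))$ to see that $r$ enters only through the scalar $1-(1-\nu)r$ multiplying the positive direction $y_1(s_1^\IN-\overline{s_1})\vec{Y}$, so that the argument of $F$ is componentwise monotone in $r$ and hence so is the integrand of \eqref{eq:mu}. Where you genuinely diverge is the endgame. The paper defines $r_*$ extremally as $\max\{r\in[0,1]:\mu(\tau)\leq 0 \text{ for all } \tau\in[0,r]\}$, argues locally that $\mu>0$ just to the right of $r_*$ (``otherwise $r_*$ could be increased''), and then propagates positivity to all of $(r_*,1]$ using only that the integrand is nondecreasing in $r$. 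You instead factor $\mu(r)=r\,y_1(s_1^\IN-\overline{s_1})I(r)$, prove $I$ is continuous and \emph{strictly} increasing (legitimate, since the paper assumes $F(\vec{s}+\varepsilon\vec{e}_i)>F(\vec{s})$, and the relevant points stay on the strictly positive segment from $\varphi_1(\vec{s}^\IN)$ to $\vec{s}^\IN$, so $F$ is bounded away from $0$ there), and locate $r^*$ as the unique zero of $I$ via the intermediate value theorem, with the case $I(0)\geq 0$ giving $r^*=0$. What your route buys is a cleaner localization of $r^*$: strict monotonicity of $I$ makes the single sign change and uniqueness automatic and removes the need for the paper's slightly delicate local $\varepsilon$-argument at $r_*$; what the paper's route buys is that it only ever uses nondecreasingness of the integrand, so it would survive with a merely nondecreasing $F$. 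Both correctly treat $\vec{s}^\IN\in\Omega_1$ and $s_1^\IN>\overline{s_1}$ as standing assumptions from the paragraph defining $\mu$.
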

\begin{proof}
Let \begin{equation}\label{eq:rstar}
    r_* = \max\{ r\in[0,1]: \mu(\tau) \leq 0~\text{for all}~ \tau\in[0,r]\}. 
\end{equation}
Note that $r_*$ is well defined, since $\mu(0)=0$ and $\mu$ is a continuous function of $r$. Since $\mu(1)>0$, it follows that $r_*\in[0,1)$. By definition of $r_*$, there exists $\varepsilon>0$ such that
\begin{equation*}
\mu(r)>\mu(r_*)=0
\end{equation*}
for all $r\in(r_*,r_*+\varepsilon)$. If not, then $r_*$ could be increased, violating the definition of $r_*$. For each $\nu\in[0,1]$, $F(\varphi_\nu(\widehat{\vec{s}}^+(r)))$ is a nondecreasing function of $r$, since 
 \begin{align*}\varphi_\nu(\widehat{\vec{s}}^+(r)) &= \widehat{\vec{s}}^+(r)-\nu y_1(\change{\overline{s_1}^+}-\overline{s_1})\vec{Y},\\
 &= \vec{s}^\IN - y_1(s_1^\IN-\overline{s_1})\vec{Y}+r(1-\nu)y_1(s_1^\IN-\overline{s_1})\vec{Y},
 \end{align*}
\change{which follows from $\widehat{s}_1^+=\overline{s_1}^+=rs_1^\IN+(1-r)\overline{s_1}$. } It follows that $\mu(r)>\mu(r_*)$ for all $r\in(r_*,1]$.
\end{proof}

\begin{proposition}\label{proposition:fail}
Assume $\vec{s}^\IN\in\Omega_1$ and let $(s_1(t),\dots,s_n(t),x(t))$ be a solution to \eqref{eq:model} with positive initial conditions.
\begin{enumerate}[label=(\roman*)]
    \item If $\mu(r)<0$, then there are finitely many impulses.
    \item If $\mu(r)=0$, then either finitely many impulses occur or the time between impulses tends to infinity.
\end{enumerate}
\end{proposition}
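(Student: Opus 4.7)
The plan is to reduce both parts to a single linear recursion for the post-impulse biomass sequence $x^k = x(t_k^+)$. Set
$$\mu_k := y_1(\overline{s_1}^+-\overline{s_1})\int_0^1\left(1-\frac{D}{F(\varphi_\nu(\vec{s}^k))}\right)d\nu,$$
which by \eqref{eq:x} equals $u_1(\vec{s}^k,x^k)-x^k$. Combining with the impulse rule $x^{k+1}=(1-r)u_1(\vec{s}^k,x^k)$ from \eqref{eq:Impulses} yields the linear recursion
$$x^{k+1}=(1-r)x^k+(1-r)\mu_k.$$

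Suppose for contradiction in part (i), or on the hypothesis of infinitely many impulses in part (ii), that the sequence $\{t_k\}$ is infinite. Since $s_1^k=\overline{s_1}^+$ for every $k\ge 1$ and \cref{corollary:periodic} gives $V_i(\vec{s}^k)\to 0$ for $i=2,\dots,n$, it follows that $\vec{s}^k\to \widehat{\vec{s}}^+$. The assumption $\vec{s}^\IN\in\Omega_1$ translates to $\overline{V_i}<0$ for all $i\ge 2$, which forces every component of $\widehat{\vec{s}}=\varphi_1(\widehat{\vec{s}}^+)$ to be strictly positive. Hence $F$ is bounded away from $0$ on the compact limiting segment $\{\varphi_\nu(\widehat{\vec{s}}^+):\nu\in[0,1]\}$, and by Lipschitz continuity of $F$ together with dominated convergence, $\mu_k\to\mu(r)$. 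Solving the recursion explicitly gives $x^k=(1-r)^k x^0+(1-r)\sum_{j=0}^{k-1}(1-r)^{k-1-j}\mu_j$, and a standard splitting of the sum at an index past which $|\mu_j-\mu(r)|<\varepsilon$ yields $x^k\to (1-r)\mu(r)/r$.

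For part (i), $\mu(r)<0$ makes the limit strictly negative, contradicting $x^k>0$ guaranteed by \cref{lemma:Bio_well_posed}. Hence only finitely many impulses can occur.

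For part (ii), $\mu(r)=0$ forces $x^k\to 0$, and I would combine this with the cycle-time formula from the proof of \cref{lemma:beating},
$$t_{k+1}-t_k = y_1(\overline{s_1}^+-\overline{s_1})\int_0^1\frac{d\nu}{F(\varphi_\nu(\vec{s}^k))\,u_\nu(\vec{s}^k,x^k)},$$
together with the elementary upper bound $u_\nu(\vec{s}^k,x^k)\le x^k+y_1(\overline{s_1}^+-\overline{s_1})\nu$ (immediate from \eqref{eq:x} since $1-D/F\le 1$) and an upper bound $F\le F_M$ available on the bounded invariant region supplied by \cref{lemma:Bio_well_posed}. Explicit integration then yields
$$t_{k+1}-t_k \ge \frac{1}{F_M}\ln\!\left(1+\frac{y_1(\overline{s_1}^+-\overline{s_1})}{x^k}\right)\longrightarrow\infty.$$
The main obstacle is the step $\mu_k\to\mu(r)$, which depends on keeping $F$ uniformly bounded away from $0$ along the trajectories $\varphi_\nu(\vec{s}^k)$ for $k$ large; this is exactly what the hypothesis $\vec{s}^\IN\in\Omega_1$ provides, via its equivalent formulation $\overline{V_i}<0$ placing $\widehat{\vec{s}}$ in the open positive cone. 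Once that is in hand, the rest of the argument is routine manipulation of the scalar recursion and a single elementary integral bound.
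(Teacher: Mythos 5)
Your argument is correct, and for the core of the proof it mirrors the paper: assume infinitely many impulses, use \cref{corollary:periodic} to get $\vec{s}^k\to\widehat{\vec{s}}^+$ (with $F$ bounded away from zero on the limiting segment precisely because $\vec{s}^\IN\in\Omega_1$ forces $\overline{V_i}<0$, hence $\widehat{\vec{s}}$ interior), deduce $\mu_k\to\mu(r)$ from Lipschitz continuity, and feed this into the recursion $x^{k+1}=(1-r)(x^k+\mu_k)$. For (i) the paper bounds the increments $x^{k+1}-x^k$ eventually above by $\tfrac{1-r}{2}\mu(r)<0$ and concludes $x^k\to-\infty$, whereas you pass to the explicit limit $x^k\to(1-r)\mu(r)/r<0$; these are interchangeable routes to the same positivity contradiction. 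The genuine difference is in (ii): after obtaining $x^k\to 0$ (as in the paper), the paper concludes $t_{k+1}-t_k\to\infty$ by appealing, somewhat informally, to convergence of the orbit to the heteroclinic connection from $(\widehat{\vec{s}}^+,0)$ to $(\widehat{\vec{s}},0)$, while you instead reuse the cycle-time formula from the proof of \cref{lemma:beating} and the bounds $u_\nu(\vec{s}^k,x^k)\le x^k+y_1(\overline{s_1}^+-\overline{s_1})\nu$ and $F\le F_M$ (valid on the bounded region from \cref{lemma:Bio_well_posed}) to get the explicit estimate
\begin{equation*}
t_{k+1}-t_k\;\ge\;\frac{1}{F_M}\ln\!\left(1+\frac{y_1(\overline{s_1}^+-\overline{s_1})}{x^k}\right)\longrightarrow\infty,
\end{equation*}
which is elementary, quantitative, and arguably tightens the step the paper leaves heuristic; the paper's heteroclinic phrasing, in exchange, explains the dynamical picture (solutions spending ever longer near the washout state) that your inequality quantifies.
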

\begin{proof}
    Suppose the solution has infinitely many impulses. By \cref{corollary:periodic}, $\vec{s}^k\to\vec{\widehat{s}}^+$ as $k\to\infty,$ and by \cref{lemma:Bio_well_posed}, $x^k\geq0$ for all $k\geq0$.  
    
    \textit{(i)} If $\mu(r)<0$, then \change{ 
    \begin{align*}
       x^{k+1}-x^k\leq x^{k+1}-(1-r)x^k=(1-r)y_1(\overline{s_1}^+-\overline{s_1})\int_0^1\left(1-\frac{D}{F(\varphi_\nu(\vec{s}^k))}\right)d\nu.
    \end{align*}
    Note that, since $F$ is Lipschitz continuous, there exists $K>0$ such that
    \begin{align*}
    \sup_{\nu\in[0,1]}\left\vert\frac{D}{F(\varphi_\nu(\vec{s}^k))}-\frac{D}{F(\varphi_\nu(\widehat{\vec{s}}^+))}\right\vert\leq DK \sup_{\nu\in[0,1]}\left\vert \frac{\varphi_\nu(\vec{s}^k)-\varphi_\nu(\widehat{\vec{s}}^+)}{F(\varphi_\nu(\vec{s}^k))F(\varphi_\nu(\widehat{\vec{s}}^+))}\right\vert,
    \end{align*}
    which, since $\vec{s}^k\to\widehat{\vec{s}}^+$, converges to zero. Thus, the integrand converges uniformly as $k\to\infty$ and  
    \begin{equation*}
    (1-r)y_1(\overline{s_1}^+-\overline{s_1})\int_0^1\left(1-\frac{D}{F(\varphi_\nu(\vec{s}^k))}\right)d\nu\to (1-r)\mu(r)<0
    \end{equation*}
    as $k\to \infty$. Thus, there exists $M>0$ such that 
    $x^{k+1}-x^k<\frac{1-r}{2}\mu(r)<0$ for all $k>M$, and therefore $x^k\to -\infty$ as $k\to \infty,$ contradicting \cref{lemma:Bio_well_posed}.}
    
    \textit{(ii)} If $\mu(r)=0$, then 
    \begin{equation*}
        \lim_{k\to\infty} x^{k+1}-(1-r)x^k = 0,
    \end{equation*}
    implying that $x^k\to 0$ as $k\to\infty$. Using the relation $x^{k+1}= (1-r)u_1(\vec{s}^k,x^k)$, it follows that $u_1(\vec{s}^k,x^k)\to 0$ as $k\to\infty$. Therefore, $u_\nu(\vec{s}^k,x^k)$ converges to the heteroclinic orbit of \eqref{eq:ODES} that connects $(\widehat{\vec{s}}^+,0)$ to $(\widehat{\vec{s}},0)$ as $k\to \infty$. This implies that $t_{k+1}-t_k\to \infty$.
\end{proof}

\begin{example}\label{ex:munegative}
Consider \eqref{eq:model} with $n=3$,
\begin{equation*}
    F(\vec{s}) =\frac{0.4s_1}{0.25+s_1}\cdot\frac{1.3s_2}{0.3+s_2}\cdot\frac{0.5s_3}{0.5+s_3},
\end{equation*}
$r=0.7$, $\vec{Y} =(1.00,0.83,1.25)$, $\overline{s_1}=0.4$, $D =0.1$ and $\vec{s}^{\rm{In}}=(1,1,1)$. By definition $\overline{V}_2 = -0.6$ and $\overline{V}_3 = -0.2 $. Since $\overline{V}_3=\max\{\overline{V_2},\overline{V_3}\}$, we project solutions onto the $s_1$--$s_3$ plane, and see that $\vec{s}^\IN\in\Omega_1$. Since $\mu(r)\approx -0.2924<0$, by \cref{proposition:fail}, there are a finite number of impulses and $x(t) \to 0$ as $t\to\infty$. This is illustrated in \cref{fig:munegative}.

\begin{figure}[h]
    \centering
    \includegraphics[width=\textwidth]{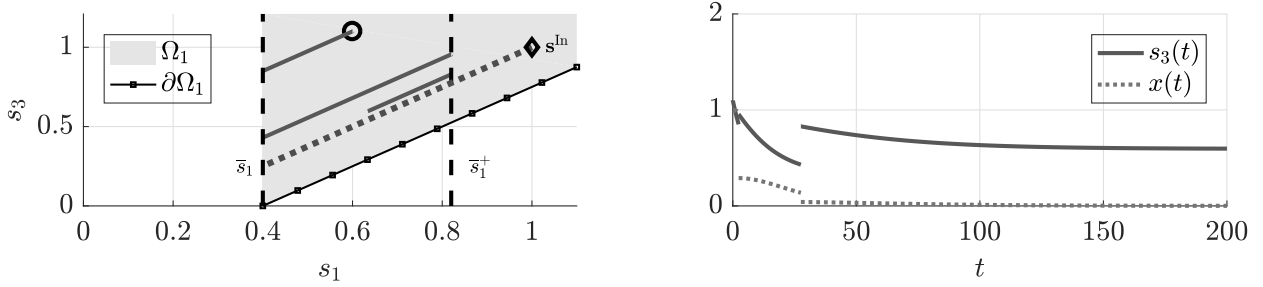}
    \caption{The dynamics of \cref{ex:munegative}, in which $\mu(r)<0$, illustrated by projecting orbits onto $s_1$--$s_3$ space, with the line through $\vec{s}^\IN$ shown in dotted red on the left. Solutions of $s_3$ and $x$ as functions of time are shown on the right. As predicted by \cref{proposition:fail},  only finitely many impulses occur and $x(t)\to 0$ as $t\to\infty$.}
    \label{fig:munegative}
\end{figure}
\end{example}

\subsection{Stability of the Periodic Solution}
In this section, we assume that $\vec{s}^\IN\in\Omega_1$ and $\mu(1)>0$. We fix $r\in(r_*,1)$, where $r_*$ is given in \cref{proposition:rstar}, so that $\mu(r)>0$ and system \eqref{eq:model} has a unique periodic solution. 

For any $\vec{s}^0\in \Omega_1$, we define the net change in $x$ over the time until the first impulse by 
\begin{equation}
    I(\vec{s}^0) =  y_1(s^0_1-\overline{s_1})\int_0^1\left( 1-\frac{D}{F(\varphi_\nu(\vec{s}^0))}\right)d\nu.
\end{equation}
 Since $\vec{s}^0\in\Omega_1$, $I(\vec{s}^0)$ is finite and an impulse occurs as long as $x^0$ is large enough. Note that $I(\widehat{\vec{s}}^+)=\mu(r)$. Define
 \begin{equation}
     \Gamma^+ = \{ \vec{s}\in\mathbb{R}_+^n: s_1 = \overline{s_1}^+\}
 \end{equation}
 and
 \begin{equation}
     G^+=\{\vec{s}\in\Gamma^+\cap\Omega_1:I(\vec{s})>0\},
 \end{equation}
 the subset of $\Gamma^+$ with positive growth before the first impulse.  Also define
\begin{equation}
    G^-=\{\varphi_1(\vec{s})\in\Gamma^-:\vec{s}\in G^+\}
\end{equation}
 the image of $G^+$ under $\varphi_1$ in $\Gamma^-$. Let $g:\Gamma^-\to\Gamma^+$ be the impulse map acting on $\vec{s}$. I.e., for $\vec{s}\in\Gamma^-$,
\begin{equation*}
    g(\vec{s}) =r\vec{s}^\IN+(1-r)\vec{s}. 
\end{equation*}
The composition $(g\circ\varphi_1)(\vec{s}^0)=\vec{s}^{1}$, and more generally $(g\circ\varphi_1)(\vec{s}^k)=\vec{s}^{k+1}$ for $k=0,1,\dots$.

\begin{lemma}\label{lemma:Gamma}
Assume that $\vec{s}^\IN\in\Omega_1$ and $\mu(r)>0$. Then there exists $\rho>0$ such that $\Gamma_\rho^+ := \{\vec{s}\in\Gamma^+:V_i(\vec{s})>-\rho~\text{for all}~i=2,\dots,n\}$ is a subset of $G^+$.
\end{lemma}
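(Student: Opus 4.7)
The plan is to locate $\widehat{\vec{s}}^+$ inside $G^+$ and then show that $G^+$ absorbs an entire neighborhood of it measured in the $V$-coordinates. By \cref{lemma:Lyapunov}, $V_i(\widehat{\vec{s}}^+)=V_i(\vec{s}^\IN)=0$ for every $i\geq 2$, and since $\vec{s}^\IN\in\Omega_1$ forces $\overline{V_i}<0$, we already have $\widehat{\vec{s}}^+\in\Gamma^+\cap\Omega_1$. Combined with $I(\widehat{\vec{s}}^+)=\mu(r)>0$, this places $\widehat{\vec{s}}^+\in G^+$, and the task becomes showing that both defining conditions of $G^+$ (membership in $\Omega_1$ and positivity of $I$) persist under sufficiently small perturbations in each $V_i$-coordinate.

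First I would convert the constraint $V_i(\vec{s})>-\rho$ on $\Gamma^+$ into a coordinate-wise lower bound. For any $\vec{s}\in\Gamma^+$ the identity $s_1=\overline{s_1}^+$ rearranges $V_i(\vec{s})>-\rho$ into $s_i>\widehat{s}_i^+-\rho/y_i$ for each $i\geq 2$, so I define the \emph{corner} point $\vec{s}_\rho\in\mathbb{R}^n$ by $(\vec{s}_\rho)_1=\overline{s_1}^+$ and $(\vec{s}_\rho)_i=\widehat{s}_i^+-\rho/y_i$ for $i\geq 2$. Using the identity $\widehat{s}_i=-\overline{V_i}/y_i$, which is positive precisely because $\vec{s}^\IN\in\Omega_1$, I can fix $\rho_0>0$ small enough that whenever $\rho<\rho_0$ the entire segment $\{\varphi_\nu(\vec{s}_\rho):\nu\in[0,1]\}$ lies in the open positive cone, $F$ is bounded below by some $\varepsilon>0$ along this segment (using continuity of $F$ and compactness of $[0,1]$), and $\rho<-\overline{V_i}$ holds for every $i\geq 2$ so that $\Gamma_\rho^+\subseteq\Omega_1$.

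The crucial step is a monotonicity reduction. Any $\vec{s}\in\Gamma_\rho^+$ satisfies $s_1=(\vec{s}_\rho)_1$ and $s_i>(\vec{s}_\rho)_i$ for $i\geq 2$; since $\varphi_\nu$ shifts each coordinate by an amount that depends only on $s_1$, the ordering propagates to $\varphi_\nu(\vec{s})\geq\varphi_\nu(\vec{s}_\rho)$ component-wise for every $\nu\in[0,1]$. Monotonicity of $F$ in each argument then yields $F(\varphi_\nu(\vec{s}))\geq F(\varphi_\nu(\vec{s}_\rho))$, and since the prefactor $y_1(\overline{s_1}^+-\overline{s_1})$ is positive this gives $I(\vec{s})\geq I(\vec{s}_\rho)$. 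Continuity of $I$ at $\widehat{\vec{s}}^+$, immediate from the uniform positivity of $F\circ\varphi_\nu$ on a neighborhood together with the Lipschitz property of $F$, then gives $I(\vec{s}_\rho)\to I(\widehat{\vec{s}}^+)=\mu(r)>0$ as $\rho\to 0^+$. Choosing $\rho\in(0,\rho_0)$ small enough that $I(\vec{s}_\rho)>0$ completes the argument, because then every $\vec{s}\in\Gamma_\rho^+$ lies in $\Omega_1$ and satisfies $I(\vec{s})\geq I(\vec{s}_\rho)>0$. The hard part is conceptual: $\Gamma_\rho^+$ is \emph{unbounded} in the $s_i$-directions for $i\geq 2$, so a naive continuity-and-compactness argument cannot succeed; the strict monotonicity of $F$ is precisely what collapses the infimum of $I$ over the unbounded set $\Gamma_\rho^+$ to its value at the single corner point $\vec{s}_\rho$.
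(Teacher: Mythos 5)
Your proof is correct and follows essentially the same route as the paper's: your corner point $\vec{s}_\rho$ is exactly the paper's comparison point $\widetilde{\vec{s}}(\rho)=\widehat{\vec{s}}^+-(0,\rho/y_2,\dots,\rho/y_n)^T$, and your reduction of $I$ over the unbounded set $\Gamma^+_\rho$ to its value at that corner via monotonicity of $F$ is the paper's closing argument as well. The only difference is how $\rho$ is selected: you take any sufficiently small $\rho$ using continuity of $\rho\mapsto I(\vec{s}_\rho)$ at $\rho=0$ together with $I(\widehat{\vec{s}}^+)=\mu(r)>0$, whereas the paper pins down the maximal admissible $\rho$ along this ray by an intermediate-value argument (after a Lipschitz estimate showing $I(\widetilde{\vec{s}}(z))<0$ near $z=\sigma$), which yields a sharper constant but is not required for the existence statement of the lemma.
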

\begin{proof}
Let $\widetilde{\vec{s}}(z)=\widehat{\vec{s}}^+-(0,z/y_2,\dots,z/y_n)^T$. Then, by \cref{lemma:Lyapunov},
\begin{align*}
    V_i(\widetilde{\vec{s}}(z)) = y_1\bigg(s_1^\IN-\overline{s_1}^+\bigg)-y_i\left(s^\IN_i-\bigg(\widehat{s_i}^+-\frac{z}{y_i}\bigg)\right)=V_i(\widehat{\vec{s}}^+)-z=-z
\end{align*}
for $i=2,\dots,n$. Since $\vec{s}^\IN\in\Omega_1$, $\overline{V_i}<0$ for all $i=2,\dots,n$. Let  $ \sigma =\min\{-\overline{V_i}: i=2,\dots,n\} > 0$. \change{Then $\widetilde{\vec{s}}(\sigma)\in\partial \Omega_1$, and thus, by \cref{lemma:Lyapunov},} $\varphi_\nu(\widetilde{\vec{s}}(\sigma))$ is in $\partial\Omega_1\subset\mathbb{R}^n_+$ for all $\nu\in[0,1)$ and, \change{by the definition of $\overline{V_i}$, $\varphi_\nu(\widetilde{\vec{s}}(\sigma))$} intersects the boundary of $\mathbb{R}^n_+$ when $\nu =1$. Thus, $F(\varphi_\nu(\widetilde{\vec{s}}(\sigma)))>0$ for all $\nu\in[0,1)$ and
$F(\varphi_1(\widetilde{\vec{s}}(\sigma)))=0$.
\change{Since $F(\varphi_\nu(\vec{s}))$ is Lipschitz continuous and decreasing in $\nu$, there exists $K>0$ such that
$$ F(\varphi_\nu(\vec{s}))-F(\varphi_1(\vec{s})) \leq K( 1-\nu),$$ for all $\nu\in[0,1]$.  Since $F$ is increasing in each of its components,  $F(\varphi_1(\widetilde{\vec{s}}(\sigma-\delta)))>F(\varphi_1(\widetilde{\vec{s}}(\sigma)))=0$ for all $\delta>0$. By continuity, there exists $\delta>0$ sufficiently small such that $0<F(\varphi_1(\widetilde{\vec{s}}(\sigma-\delta)))\leq Ke^{-\frac{K}{D}}$. Thus 
\begin{align*}
    I(\widetilde{\vec{s}}(\sigma-\delta)) &= y_1(\overline{s_1}^+-\overline{s_1})\int_0^1 \left(1-\frac{D}{F(\varphi_\nu(\widetilde{\vec{s}}(\sigma-\delta)))} \right)d\nu\\
    &=y_1(\overline{s_1}^+-\overline{s_1})\int_0^1 \left(1-\frac{D}{F(\varphi_\nu(\widetilde{\vec{s}}(\sigma-\delta)))-F(\varphi_1(\widetilde{\vec{s}}(\sigma-\delta)))+F(\varphi_1(\widetilde{\vec{s}}(\sigma-\delta)))} \right)d\nu\\
    &\leq y_1(\overline{s_1}^+-\overline{s_1})\int_0^1 \left(1-\frac{D}{K(1-\nu)+Ke^{-\frac{K}{D}}} \right)d\nu\\ 
    &= -y_1(\overline{s_1}^+-\overline{s_1})\frac{D}{K}\log\left(1+e^{-\frac{K}{D}}\right)<0.
\end{align*}}

For $z<\sigma$, $\widetilde{\vec{s}}(z)\in\Omega_1$ and $I(\widetilde{\vec{s}}(z))$ is a continuous function of $z$. Since  $I(\widetilde{\vec{s}}(0))=I(\widehat{\vec{s}}^+) = \mu(r)>0$, by the intermediate-value theorem there exists $z\in (0,\sigma)$ such that $I(\widetilde{\vec{s}}(z))=0$. Let $\rho = \sup\{z\in(0,\sigma): I(\widetilde{\vec{s}}(z))>0\}$. Thus, the set $\Gamma_\rho^+$ is well defined, and all that is left is to show that $\Gamma_\rho^+\subset G^+$. 

Let $\vec{s}\in\Gamma_\rho^+$. Then there exists $\varepsilon>0$ such that $V_i(\vec{s})>-\rho+\varepsilon = V_i(\widetilde{\vec{s}}(\rho-\varepsilon))$ for each $i=2,\dots,n$. This implies that \change{$s_i>\widehat{s_i}^+-(\rho-\varepsilon)/y_i = \widetilde{s_i}(\rho-\varepsilon)$, i.e., that each component of $\vec{s}$ is larger than the corresponding component of $\widetilde{\vec{s}}(\rho-\varepsilon)$.} By the definition of $\rho$, we have $I(\widetilde{\vec{s}}(\rho-\varepsilon))>0$. Since $F(\vec{s})$ is nondecreasing in each of the $s_i$,
\begin{equation*}    I(\vec{s})\geq I(\widetilde{\vec{s}}(\rho-\varepsilon))>0. \qedhere \end{equation*}
\end{proof}

If $n=2$, then \cref{lemma:Gamma} implies that there exists $s_2^\flat>0$ such that $G^-=\{\overline{s_1}\}\times(s_2^\flat,\infty)$. This is the result of Lemma 4.9 in \cite{Hsu2019}. If $n>2$, then we are unable to find such an explicit formulation of \change{$G^-$.}

We use the set $G^-$ to define
\begin{equation}
    \Omega_{G} = \{\vec{s}^0\in\Omega_1: \varphi_1(\vec{s}^0)\in G^-\},
\end{equation}
 the set of points in $\Omega_1$ that will flow through $G^-$ for some value of $x^0$. Using \eqref{eq:Lyapunov} and \cref{lemma:Gamma}, we define
\begin{align*}
    \change{ \Gamma_\rho^- &=\{ \vec{s}\in\Gamma^-:V_i(\vec{s}) >-\rho, i = 2,\dots,n\}}\\
    \Omega^\rho &= \{\vec{s}\in\Omega_1: V_i(\vec{s}) >-\rho, i = 2,\dots,n\},
\end{align*}
where $\rho$ is given in \cref{lemma:Gamma}. It is clear that
\change{$\Gamma^-_\rho\subset G^-$ and $\Omega^\rho\subseteq    \Omega_G$.}
\change{\begin{rmk}
The set $\Gamma^+_\rho$ is convex since if $\vec{p}\in\Gamma^+_\rho$ and $\vec{q}\in\Gamma^+_\rho$, then 
\begin{align*}
    V_i(\tau\vec{p}+(1-\tau)\vec{q}) & = y_1(s_1^\IN-\overline{s_1}^+)-y_i(s_i^\IN-\tau p_i-(1-\tau)q_i),\\
    &=\tau y_1(s_1^\IN-\overline{s_1}^+)-\tau y_i(s_i^\IN-p_i)+(1-\tau)y_1(s_1^\IN-\overline{s_1}^+)-(1-\tau)y_i(s_i^\IN-q_i),\\
    &=\tau V_i(\vec{p})+(1-\tau)V_i(\vec{q}),\\
    &>-\tau \rho -(1-\tau)\rho = -\rho,
\end{align*}
for all $\tau\in[0,1]$. In particular, if $\vec{s}^k\in\Gamma^+_\rho$, then \begin{align*}
\vec{s}^{k+1} &= r\vec{s}^\IN +(1-r)\varphi_1(\vec{s}^k),\\
&= r\vec{s}^\IN -(1-r)y_1(rs_1^\IN-r\overline{s_1})\vec{Y}+(1-r)\vec{s}^k,\\
&= r\widehat{\vec{s}}^++(1-r)\vec{s}^k.
\end{align*} 
Thus, $\vec{s}^{k+1}$ is a convex combination of two points in $\Gamma^+_\rho$, and therefore an element of $\Gamma^+_\rho$ itself.  This also implies that if $\vec{s}\in\Gamma^-_\rho$, then $g(\vec{s})\in\Gamma^+_\rho$. \\
In general, the set $G^+$ might not be convex unless we impose further restrictions on $F$, and so it may not be true for all functions $F$ that if $\vec{s}\in G^-$, then $g(\vec{s})\in G^+.$
\end{rmk}}

\begin{lemma}\label{lemma:convergence}Assume that $\vec{s}^\IN\in \Omega_1$ and $\mu(r)>0$. Let $(s_1(t),\dots,s_n(t),x(t))$ be a solution of system \eqref{eq:model} with $x^0>0$ and \change{$\vec{s}^0\in\Omega^\rho$.} 
\begin{enumerate}
    \item If $x^0\leq -I(\vec{s}^0)$, then there are no impulses. 
    \item As $t\to \infty$, $(s_1(t),\dots,s_n(t),x(t))$ converges to the unique periodic orbit given by \cref{theorem:Periodic} if and only if $x^0>-I(\vec{s}^0)$.
\end{enumerate}
\end{lemma}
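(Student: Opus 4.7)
The plan is to prove the two parts in sequence. For part 1, formula \eqref{eq:x} gives
$$u_1(\vec{s}^0,x^0)=x^0+I(\vec{s}^0),$$
which is exactly the value $x(t_1^-)$ whenever a first impulse occurs at time $t_1$. Since $\dot x=(F-D)x$ preserves strict positivity along the ODE flow and $x^0>0$, any such $t_1$ must satisfy $x^0+I(\vec{s}^0)=x(t_1^-)>0$. Contrapositively, $x^0\le -I(\vec{s}^0)$ rules out impulses, establishing part 1 and simultaneously the ``only if'' direction of part 2 (convergence to the periodic orbit requires infinitely many impulses, hence at least one, hence $x^0>-I(\vec{s}^0)$).

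For the ``if'' direction of part 2, I would first establish that impulses persist and that the post-impulse iterates $\vec{s}^k$ lie in $\Gamma^+_\rho$ for all $k\ge 1$. The assumption $x^0>-I(\vec{s}^0)$ forces the first impulse, producing $\vec{s}^1=g(\varphi_1(\vec{s}^0))$ with $x^1=(1-r)(x^0+I(\vec{s}^0))>0$. By \cref{lemma:Lyapunov}, $V_i(\varphi_1(\vec{s}^0))=V_i(\vec{s}^0)>-\rho$, so $\varphi_1(\vec{s}^0)\in\Gamma^-_\rho$; applying \cref{lemma:Lyapunov} once more under the impulse map gives $V_i(\vec{s}^1)=(1-r)V_i(\varphi_1(\vec{s}^0))>-\rho$, placing $\vec{s}^1\in\Gamma^+_\rho$. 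The preceding Remark then yields the recursion $\vec{s}^{k+1}=r\widehat{\vec{s}}^++(1-r)\vec{s}^k$ for $k\ge 1$, which, since both $\widehat{\vec{s}}^+$ and $\vec{s}^k$ lie in the convex set $\Gamma^+_\rho$, keeps $\vec{s}^{k+1}\in\Gamma^+_\rho$ by induction. Because $\Gamma^+_\rho\subseteq G^+$ by \cref{lemma:Gamma}, $I(\vec{s}^k)>0$ and hence $x^{k+1}=(1-r)(x^k+I(\vec{s}^k))>0$, so infinitely many impulses occur. This invariance is the main obstacle of the proof; it leans crucially on the convexity of $\Gamma^+_\rho$ together with the Lyapunov contraction under the impulse map, and it is what guarantees that $I(\vec{s}^k)$ cannot become negative (which would stop the cycling).

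With an infinite sequence of impulses in hand, \cref{corollary:periodic} forces $\vec V(\vec{s}(t))\to\vec 0$, which together with $s_1^k=\overline{s_1}^+$ gives $\vec{s}^k\to\widehat{\vec{s}}^+$. Because $F$ is continuous and strictly positive on the compact segment $\{\varphi_\nu(\widehat{\vec{s}}^+):\nu\in[0,1]\}\subset\Omega_1$, continuity of the defining integral yields $I(\vec{s}^k)\to I(\widehat{\vec{s}}^+)=\mu(r)$. Setting $x^*=\tfrac{1-r}{r}\mu(r)$, the identity $x^*=(1-r)(x^*+\mu(r))$ combined with $x^{k+1}=(1-r)(x^k+I(\vec{s}^k))$ gives
$$x^{k+1}-x^*=(1-r)(x^k-x^*)+(1-r)\bigl(I(\vec{s}^k)-\mu(r)\bigr),$$
so iterating this $(1-r)$-contraction with vanishing perturbation yields $x^k\to x^*$. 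The inter-impulse durations from \cref{lemma:beating} are continuous functions of $(\vec{s}^k,x^k)$ and therefore converge to the period of the periodic orbit; continuous dependence on initial conditions for \eqref{eq:ODES} over a uniformly bounded time window then promotes the discrete convergence to convergence of the full continuous trajectory to the periodic orbit.
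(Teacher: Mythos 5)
Your proposal is correct and follows essentially the same route as the paper: part 1 via $u_1(\vec{s}^0,x^0)=x^0+I(\vec{s}^0)$ and positivity of $x$ along the flow, and part 2 by inductively keeping the post-impulse states in $\Gamma^+_\rho$ (so $I(\vec{s}^k)>0$ and cycling persists), invoking \cref{corollary:periodic} to get $\vec{s}^k\to\widehat{\vec{s}}^+$, and then the $(1-r)$-contraction recursion with vanishing perturbation to conclude $x^k\to\frac{1-r}{r}\mu(r)$. The only differences are that you spell out the invariance step through the Remark's convexity identity $\vec{s}^{k+1}=r\widehat{\vec{s}}^++(1-r)\vec{s}^k$ and add the final passage from discrete to continuous-time convergence, both of which the paper leaves implicit.
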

\begin{proof}
Suppose $x^0\leq-I(\vec{s}^0)$ and there is at least one impulse. By \cref{eq:x} and the definition of $I(\vec{s}^0),$
\begin{equation*}
    u_1(\vec{s}^0,x^0) = x^0 + I(\vec{s}^0) \leq 0.
\end{equation*}
This implies that $x(t)=0$ for some finite value of $t$, contradicting the uniqueness of initial values problems to ODEs. 

If $x^0>-I(\vec{s}^0)$, then by \cref{eq:x} and the definition of $I(\vec{s}^0)$, at least one impulse occurs. Let $t=t_1^-$ be the time of the first impulse. Since $\vec{s}^0\in\change{\Omega^\rho}$, we have $\vec{s}(t_1^-)=\varphi_1(\vec{s}^0)\in \change{\Gamma^-_\rho}$. It follows that $\vec{s}^1=r\vec{s}^\IN+(1-r)\varphi_1(\vec{s}^0)\change{\in \Gamma^+_\rho}$ and thus that $I(\vec{s}^1) > 0$. Therefore, there is a second impulse at $t=t_2^-$. Inductively, it follows that impulses occur indefinitely. By \cref{corollary:periodic}, $\lim_{k\to\infty}\Vert \vec{V}(\varphi_\nu(\vec{s}^k)\Vert_\infty =  0$ for all $\nu\in[0,1]$, and therefore $\vec{s}^k\to \widehat{\vec{s}}^+$ as $t\to \infty$. By \eqref{eq:x} and the relationship $I(\widehat{\vec{s}}^+) = \mu(r),$
\begin{equation*}
\lim_{k\to\infty}(u_1(\vec{s}^k,x^k) -u_0(\vec{s}^k,x^k) )= \mu(r).
\end{equation*}
On the other hand, the impulse map in \cref{eq:Impulses} gives 
\begin{equation*}
    \lim_{k\to\infty}(u_0(\vec{s}^{k+1},x^{k+1}) - (1-r)u_1(\vec{s}^k,x^k) )= 0.
\end{equation*}
Combining these, and using the fact that $u_0(\vec{s}^k,x^k) = x^k$, leads to 
\begin{equation}
    \lim_{k\to\infty}(x^{k+1}-(1-r)x^k) = (1-r)\mu(r).
\end{equation}
This implies that $\lim_{k\to\infty}x^k=\frac{1-r}{r}\mu(r)$ and $\lim_{k\to\infty}u_1(\vec{s}^k,u^k) = \frac{1}{r}\mu(r)$. 
\end{proof}
\begin{corollary}
If $\vec{s}^\IN\in\Omega_1$ and $\mu(r)>0$, then all solutions to \eqref{eq:model} with $x^0>0$ and $\vec{s}^0=\vec{s}^\IN$ converge to the periodic orbit given in \cref{theorem:Periodic}.
\end{corollary}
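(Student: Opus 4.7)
The plan is to apply \cref{lemma:convergence} with $\vec{s}^0 = \vec{s}^\IN$; the two hypotheses to verify are $\vec{s}^\IN \in \Omega^\rho$ and $x^0 > -I(\vec{s}^\IN)$. The first is essentially free: directly from \eqref{eq:Lyapunov}, $V_i(\vec{s}^\IN) = 0$ for each $i = 2,\dots,n$, so together with the hypothesis $\vec{s}^\IN \in \Omega_1$, the point $\vec{s}^\IN$ sits comfortably in $\Omega^\rho$ for whatever $\rho > 0$ is supplied by \cref{lemma:Gamma}. The second hypothesis reduces to showing $I(\vec{s}^\IN) \geq 0$, after which $x^0 > 0 \geq -I(\vec{s}^\IN)$ is automatic; I would aim in fact for strict positivity, $I(\vec{s}^\IN) > 0$.

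The strategy is to split the defining integral of $I(\vec{s}^\IN)$ at $\nu = 1-r$. Because $\widehat{\vec{s}}^+ = \varphi_{1-r}(\vec{s}^\IN)$, the trajectories $\varphi_\nu(\widehat{\vec{s}}^+)$ and $\varphi_\nu(\vec{s}^\IN)$ lie along the same linear segment through $\vec{s}^\IN$, so a routine change of variables identifies the tail $\int_{1-r}^1$ with the expression defining $\mu(r)$ (up to the explicit multiplicative constants). What remains is the integral on $[0,1-r]$, which must be shown to be non-negative.

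This is the step I expect to be the main obstacle, and it rests on a monotonicity argument. The integrand $\phi(\nu) := 1 - D/F(\varphi_\nu(\vec{s}^\IN))$ is non-increasing in $\nu$, because each component of $\varphi_\nu(\vec{s}^\IN)$ decreases linearly in $\nu$ and $F$ is increasing in each of its arguments. The hypothesis $\mu(r) > 0$ forces $\int_{1-r}^1 \phi(\nu)\,d\nu > 0$; combined with monotonicity, this means the average of $\phi$ over $[1-r,1]$ is positive, so the largest value $\phi(1-r)$ must itself be positive, i.e., $F(\widehat{\vec{s}}^+) > D$. Monotonicity of $F$ then propagates this positivity back to $[0,1-r]$, where $F(\varphi_\nu(\vec{s}^\IN)) \geq F(\widehat{\vec{s}}^+) > D$, so $\phi > 0$ there and the remaining integral is strictly positive. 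Concluding, $I(\vec{s}^\IN) > \mu(r) > 0$, and \cref{lemma:convergence} then delivers convergence to the unique periodic orbit.
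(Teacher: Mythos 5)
Your proposal is correct and follows the paper's route exactly: verify the hypotheses of \cref{lemma:convergence} at $\vec{s}^0=\vec{s}^\IN$ by noting $V_i(\vec{s}^\IN)=0$ and establishing $I(\vec{s}^\IN)>\mu(r)>0$, so that $x^0>0>-I(\vec{s}^\IN)$. The only difference is that the paper simply asserts the inequality $I(\vec{s}^\IN)>\mu(r)$, whereas you justify it (correctly) by splitting the integral at $\nu=1-r$ and using the monotonicity of $F$ along the solution segment.
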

\begin{proof}
Since $\vec{s}^0=\vec{s}^\IN$, $I(\vec{s}^0)>\mu(r)>0$, and so $x^0 > 0 >-I(\vec{s}^0).$
\end{proof}

For each $\vec{s}^0\in\Omega_1$ let $N_0=N_0(\vec{s}^0)$ be the smallest positive integer such that $\vec{s}^{N_0}\in G^+$. Clearly, if $\vec{s}^0\in\Omega_G$, we have $N_0(\vec{s}^0)=1$. 

In general, we are unable to get an exact characterization of $\Omega_G$ in terms of $\vec{V}(\vec{s}^0)$. However, we can approximate $N_0$ using $\Omega^\rho$. Let $N^\rho$ be the smallest positive integer such that $\vec{s}^{N^\rho}\in\Omega^\rho$. By applying \cref{lemma:Lyapunov} repeatedly,   
\begin{equation}\label{eq:integer}
    V_i(\vec{s}^k) = (1-r)^kV_i(\vec{s}^0).
\end{equation}
The condition that $\vec{s}^0\in\Omega_1\setminus\Omega^\rho$ is equivalent to $V_i(\vec{s}^0)\leq-\rho$ for at least one of $i=2,...,n$. By applying this to \cref{eq:integer} and solving for $k$, 
\begin{equation}\label{Nrho}
    N^\rho =  \max\left\{\left\lceil\frac{\ln(V_i(\vec{s}^0)/-\rho)}{-\ln(1-r)}\right\rceil: V_i(\vec{s}^0)\leq -\rho\right\},
    \end{equation}
    where $\lceil x \rceil$ is least integer greater than $x$. 
By \cref{lemma:Lyapunov} and since $\Omega^\rho\subset\Omega_G$, $ N_0\leq N^\rho$.  From \eqref{Nrho}, we see that $N^\rho$ has the upper bound 
\begin{equation*}
    \overline{N} =\max \left\{ \left\lceil \frac{\ln(\overline{V}_i/-\rho)}{-\ln(1-r)}\right\rceil: i = 2,\dots,n\right\},
\end{equation*}
and so $N_0\leq \overline{N}$; i.e., every trajectory enters $\Omega_G$ after finitely many impulses, or the reactor fails before then.  

For any solution to \eqref{eq:model} with $x^0>0$ and $\vec{s}^0\in\Omega_1$, if there exists $t_1^-$ with $s_1(t_1^-) = \overline{s_1}$,
\begin{equation*}
    x(t_1^-) = u_1(\vec{s}^0,x^0)=x^0+I(\vec{s}^0),
\end{equation*}
and, for any $k=2,3,...$, the value of $x(t_k^-)$ is given by
\begin{equation*}
    x(t_k^-) = x^k +I(\vec{s}^k).
\end{equation*}
Inductively,  
\begin{equation*}
    x(t_k^-)= (1-r)^{k-1}x^0 +\sum_{j=1}^k(1-r)^{k-j}I((g\circ\varphi_1)^{j-1}(\vec{s}^0)),
\end{equation*}
and therefore, $x(t_k^-)>0$ is equivalent to
\begin{equation*}
    x^0 > -\sum_{j=1}^k (1-r)^{1-j}I((g\circ\varphi_1)^{j-1}(\vec{s}^0)).
\end{equation*}
We define $X(\vec{s}^0)$ to be the minimum value of $x^0$ required for $\vec{s}(t_*^-)\in \change{\Gamma^-_\rho}$ for some $t_*^-$,
\begin{equation}\label{eq:X}
    X(\vec{s}^0) = -\min_{1\leq k \leq \change{N^\rho}}\left(\sum_{j=1}^k (1-r)^{1-j}I((g\circ\varphi_1)^{j-1}(\vec{s}^0))\right).
\end{equation}
In particular, if $\vec{s}^0\in\change{\Omega^\rho}$, then $X(\vec{s}^0) = -I(\vec{s}^0),$ since $\change{N^\rho=1.}$ 

\begin{proposition}\label{proposition:convergence}
Assume $\vec{s}^\IN\in\Omega_1$ and $\mu(r)>0$. Let $(s_1(t),\dots,s_n(t),x(t))$ be a solution of \eqref{Sec:Model} with $\vec{s}^0\in\Omega_1$ and $x^0>0$.
\begin{enumerate}[label=(\roman*)]
    \item If $x^0\leq X(\vec{s}^0)$, then there are at most \change{$N^\rho-1$} impulses.
    \item If $x^0>X(\vec{s}^0)$, then the solutions converge to the periodic orbit given in \cref{theorem:Periodic}.
\end{enumerate}
\end{proposition}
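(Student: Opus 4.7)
The plan is to exploit the explicit pre-impulse formula
$$ x(t_k^-) = (1-r)^{k-1}\left(x^0 + \sum_{j=1}^k (1-r)^{1-j}I((g\circ\varphi_1)^{j-1}(\vec{s}^0))\right), $$
displayed just above the proposition. Write $a_k$ for the inner sum, so that $X(\vec{s}^0)=\max_{1\le k\le N^\rho}(-a_k)$ and, for each $k\in\{1,\dots,N^\rho\}$, the dichotomy $x(t_k^-)>0 \iff x^0>-a_k$ holds. Both parts then reduce to comparing $x^0$ with the numbers $-a_k$.

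For part (i), I would let $k^*\in\{1,\dots,N^\rho\}$ be the smallest index with $x^0\le -a_{k^*}$, which exists by the hypothesis $x^0\le X(\vec{s}^0)$. For $k<k^*$ the strict inequality $x^0>-a_k$ shows that the first $k^*-1$ impulses occur. If the $k^*$-th impulse occurred as well, the displayed formula would give $x(t_{k^*}^-)\le 0$, contradicting the strict positivity of biomass from \cref{lemma:Bio_well_posed}. Thus the solution has at most $k^*-1\le N^\rho-1$ impulses.

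For part (ii), I would first observe that the hypothesis $x^0>X(\vec{s}^0)$ forces $x(t_k^-)>0$ for every $k=1,\dots,N^\rho$, so all $N^\rho$ impulses occur. By the definition of $N^\rho$ the post-impulse state $\vec{s}^{N^\rho}$ lies in $\Omega^\rho$, and since it also lies in $\Gamma^+$ it belongs to $\Gamma^+_\rho\subset G^+$ by \cref{lemma:Gamma}, so $I(\vec{s}^{N^\rho})>0$. Because $x^{N^\rho}=(1-r)x(t_{N^\rho}^-)>0>-I(\vec{s}^{N^\rho})$, restarting the clock at $t_{N^\rho}$ with initial data $(\vec{s}^{N^\rho},x^{N^\rho})\in\Omega^\rho\times(0,\infty)$ lets me invoke the convergence conclusion of \cref{lemma:convergence} and conclude that the solution approaches the unique periodic orbit from \cref{theorem:Periodic}.

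The only delicate step is the bookkeeping in part (i) that promotes the formal inequality $x(t_{k^*}^-)\le 0$ into the statement that the $k^*$-th impulse cannot occur. The cleanest route is contradiction: if that impulse did occur, the biomass on $(t_{k^*-1},t_{k^*})$ would satisfy \eqref{eq:ODESb}, and integrating along $s_1$ as in the derivation of \eqref{eq:x} would reproduce the displayed identity exactly, yielding $x(t_{k^*}^-)\le 0$ and contradicting \cref{lemma:Bio_well_posed}. No separate treatment of trajectories that approach $\Gamma^-$ only asymptotically is required, since such behavior is already captured by the same formula.
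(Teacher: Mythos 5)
Your proposal is correct and follows essentially the same route as the paper: both parts rest on the displayed pre-impulse formula and the definition of $X(\vec{s}^0)$, with part (i) ruling out the critical impulse via the positivity guaranteed by \cref{lemma:Bio_well_posed} and part (ii) running the solution through $N^\rho$ impulses into $\Omega^\rho$ (where $\Gamma^+_\rho\subset G^+$ gives $I(\vec{s}^{N^\rho})>0$) and then invoking \cref{lemma:convergence}. Your bookkeeping with the smallest index $k^*$ is just a slightly more explicit version of the paper's contradiction argument, so no substantive difference.
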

\begin{proof}
\textit{(i)} Suppose $x^0\leq X(\vec{s}^0)$ and there are at least $\change{N^\rho}$ impulses. Denote the first \change{$N^\rho$} impulse times by $t_1<t_2<...<t_{\change{N^\rho}}$. By \cref{eq:x} and the definition of $X(\vec{s}^0)$,
\begin{equation*}
    x(t_k^-) = u_1(\vec{s}^{k-1},x^{k-1}) =(1-r)^{k-1}(x^0 - X(\vec{s}^0))\leq 0,
\end{equation*}
for some \change{$k<N^\rho$}, which contradicts \cref{lemma:Bio_well_posed}.

\textit{(ii)} If $x^0>X(\vec{s}^0)$, then the solution has at least \change{$N^\rho$} impulses. Then $\vec{s}^{\change{N^\rho}}=(g\circ\varphi_1)^{\change{N^\rho}}(\vec{s}^0)\in\change{\Omega^\rho}$. Since $\vec{s}^{\change{N^\rho}}\in\change{\Gamma^+_\rho}$, we have $I(\vec{s}^{\change{N^\rho}})>0$, and the result follows from \cref{lemma:convergence}.
\end{proof}

\begin{example}\label{ex:minx}
Consider \eqref{eq:model} with $n=3$, 
\begin{equation*}
    F(\vec{s}) =\min\left\{\frac{0.5s_1}{1+s_1},\frac{0.7s_2}{0.4+s_2},\frac{s_3}{1+s_3}  \right\},
\end{equation*}
and $r=0.3$, $\vec{Y} =(2.0,0.2,1.0)^T$, $\overline{s_1}=0.25$, $D = 0.1$ and $\vec{s}^\IN = (0.5,0.1,0.5)$. 

\sloppy By definition, $\overline{V_2} = -0.375$, and $\overline{V_3}= -0.375$. Therefore, $\overline{V_2} = \overline{V_3} = \max \{ \overline{V_2},\overline{V_3}\}$. We are free to project solutions onto either the $s_1$-$s_2$ plane, or the $s_1$-$s_3$. Notice $\vec{s}^\IN\in\Omega_1$ and $\mu(r) \approx0.0037>0$. By \cref{theorem:Periodic} there exists a periodic solution. With the initial conditions $\vec{s}^0 = (0.3,0.01,1)^T$, we have $V(\vec{s}^0) = (0,-0.35,0.6)^T$, and so $\vec{s}^0\in\Omega_1$. We calculate the sum in \eqref{eq:X} for $n=1,\dots,N_0$
where $N^0$ is the first integer such that $(1-r)^{1-n}I(\vec{s}^{n-1})>0$. The approximate values are as follows:
\begin{center}
    \begin{tabular}{c|c|c|c|c|c|c}
        $n$ & 1 & 2 & 3 & 4 & 5 & 6  \\\hline
        $(1-r)^{1-n}I(\vec{s}^{n-1})$ & $-0.1766$ & $-0.0575$ & $-0.330$ & $-0.206$ & $-0.0104$ & $0.0007$
    \end{tabular}
\end{center}
We therefore calculate $X(\vec{s}^0) \approx 0.1766 + 0.0575 + 0.330 + 0.206 + 0.0104 = 0.2981$.  In \cref{fig:minx} (top) the initial biomass concentration is $x^0 = 0.29 < X(\vec{s}^0)$ and so by \cref{proposition:convergence}, $x(t)\to 0$ after at most 4 impulses. In \cref{fig:minx} (bottom) the initial biomass concentration is $x^0 = 0.31>X(\vec{s}^0)$, and so by \cref{proposition:convergence}, the solution converges to the periodic solution as $t\to\infty.$

\begin{figure}[]
    \centering
    \includegraphics[width=\textwidth]{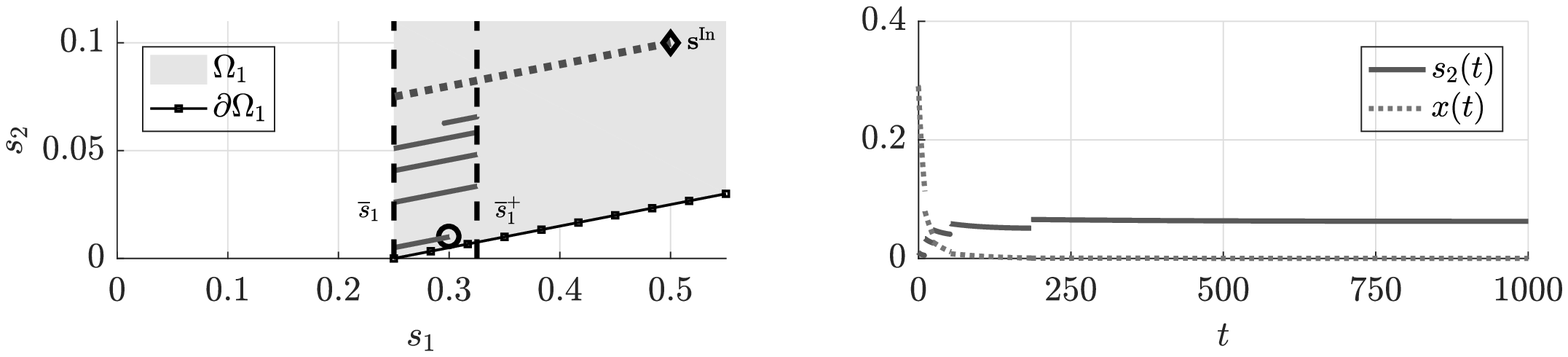}
    \includegraphics[width=\textwidth]{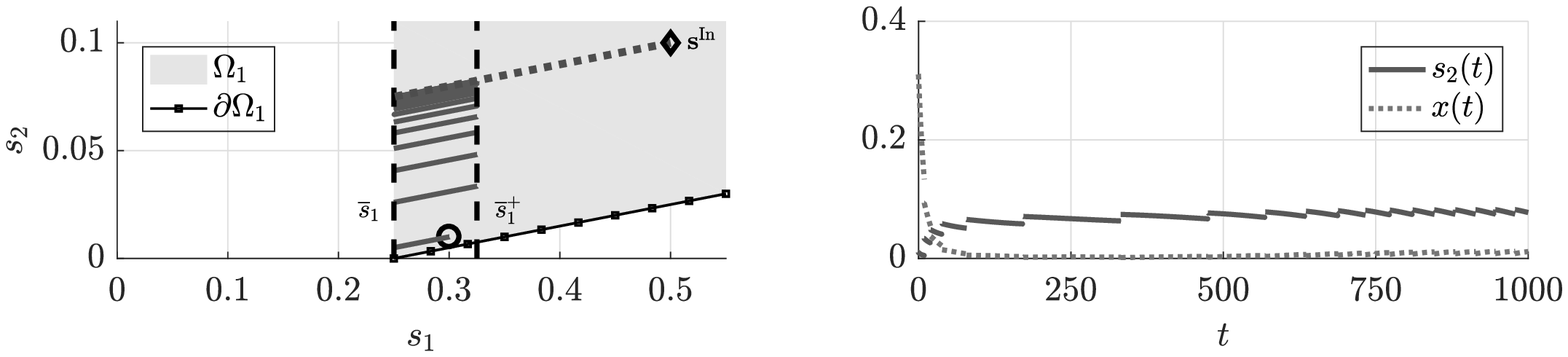}
    \caption{The dynamics of \cref{ex:minx} illustrated by projecting orbits onto $s_1$-$s_2$ space, with the line through $\vec{s}^\IN$ shown in dotted red on the left. Solutions of $s_2$ and $x$ as functions of time are shown on the right. On the top, $x^0< X(\vec{s}^0)$ and so $x(t)\to 0$ as $t\to\infty$ after at most $N_0=4$ impulses. On the bottom $x^0>X(\vec{s}^0)$ and so solutions converge to the periodic solution. }
    \label{fig:minx}
\end{figure}
\end{example}

The following theorem summarizes the results.

\begin{theorem}\label{theorem:Main}
Let $(s_1(t),\dots,s_n(t),x(t))$ be a solution of \eqref{eq:model} with positive initial conditions.
\begin{enumerate}[label=(\roman*)]
    \item If $\vec{s}^\IN\in\Omega_0$, then $(s_1(t),\dots,s_n(t),x(t))$ has only finitely many impulses, and $x(t)\to 0$ as $t\to \infty$.
    \item If $\vec{s}^\IN\in\Omega_1$ and $\mu(r)\leq0$, then $(s_1(t),\dots,s_n(t),x(t))$ either has only finitely many impulses and $x(t)\to0$ as $t\to\infty$ or the time between impulses tends to infinity and $\liminf_{t\to\infty}x(t) = 0$. 
    \item If $\vec{s}^\IN\in\Omega_1$ and $\mu(r)>0$, then there is a unique periodic orbit. Either $(s_1(t),\dots,s_n(t),x(t))$ has infinitely many impulses and converges to the periodic orbit or $(s_1(t),\dots,s_n(t),x(t))$ has only finitely many impulses and $x(t)\to 0$ as $t\to\infty$. The case with infinitely many impulses occurs if and only if 
    \begin{equation*}
        \vec{s}^0\in\Omega_1,\quad \text{and}\quad x^0>X(\vec{s}^0).
    \end{equation*}
    \end{enumerate}
\end{theorem}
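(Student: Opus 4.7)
The plan is to treat Theorem 3.10 as a synthesis result: each of its three cases should fall out by assembling the lemmas and propositions already proved, with only a little bookkeeping needed to reconcile the hypotheses (which are stated in terms of $\vec{s}^\IN$) with the initial condition $\vec{s}^0$, since Propositions 3.8--3.9 and Lemma 3.3 classify behaviour in terms of the initial data.

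For case (i), I would invoke Lemma 3.5 (\cref{lemma:fail}) directly. For case (ii), I would split on the sign of $\mu(r)$. When $\mu(r)<0$, Proposition 3.8(i) gives at most finitely many impulses, after which the dynamics are governed by \eqref{eq:ODES} and Lemma 3.1 (\cref{lemma:odes}) forces $x(t)\to 0$. When $\mu(r)=0$, Proposition 3.8(ii) gives the stated dichotomy. In the subcase of infinitely many impulses with diverging interimpulse times, the proof of Proposition 3.8(ii) actually shows $x^k\to 0$, and since $x(t_k^+)=x^k$ this gives $\liminf_{t\to\infty}x(t)=0$, matching the theorem statement.

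For case (iii), existence and uniqueness of the periodic orbit are immediate from Theorem 3.6 (\cref{theorem:Periodic}). For the long-term behaviour of an arbitrary solution I would split on the location of $\vec{s}^0$: if $\vec{s}^0\in\Omega_0$, Lemma 3.3 (\cref{lemma:Omeganot}) says no impulses occur and Lemma 3.1 yields $x(t)\to 0$; the borderline case $\vec{s}^0\in\partial\Omega_1$, noted in the remark after Lemma 3.5, likewise produces no impulses and hence $x(t)\to 0$. If $\vec{s}^0\in\Omega_1$, Proposition 3.9 gives both conclusions: $x^0>X(\vec{s}^0)$ yields infinitely many impulses with convergence to the periodic orbit, while $x^0\le X(\vec{s}^0)$ produces at most $N^\rho-1$ impulses, after which Lemma 3.1 drives $x$ to zero. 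The "if and only if" then follows because convergence to the periodic orbit (where the biomass stays strictly positive) is incompatible with the alternative $x(t)\to 0$, so the converse is the contrapositive of the two subcases just listed.

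No step involves a genuinely new estimate; the one point requiring any care is the $\liminf_{t\to\infty}x(t)=0$ claim in case (ii), since Proposition 3.8(ii) only asserts divergence of the interimpulse times. Once one observes that $x^k\to 0$ gives vanishing biomass at a sequence of times, this is immediate. The rest is citation and case analysis, so the proof should be short.
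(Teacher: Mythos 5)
Your proposal is correct and follows essentially the same route as the paper, whose proof of this theorem is simply the assembly of \cref{lemma:Omeganot,lemma:fail,theorem:Periodic,proposition:fail,proposition:convergence}. You merely make explicit the bookkeeping the paper leaves implicit (e.g.\ that $x^k\to 0$ in the $\mu(r)=0$ case yields $\liminf_{t\to\infty}x(t)=0$, and that \cref{lemma:odes} finishes the finitely-many-impulses subcases), which is a faithful expansion rather than a different argument.
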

    \begin{proof}
        The results follow from \cref{lemma:Omeganot,lemma:fail,theorem:Periodic,proposition:convergence,proposition:fail}.
    \end{proof}

\section{Conclusions}\label{Sec:Conclusions}
We have modelled the self-cycling-fermentation process assuming that there are an arbitrary number of essential resources, $\vec{s}\in\mathbb{R}^n$, that are 
growth limiting for a population of microogranisms, $x$, using a system of impulsive differential equations. We assume that the criterion for decanting the reactor occurs when the concentration of the first nutrient reaches a threshold, $\overline{s_1}$. The process is considered successful if, once initiated, it proceeds indefinitely without intervention. 

By solving the associated system of ODEs in terms of the first nutrient, $s_1$, we have shown that the solutions, when projected onto the nutrient hyperplane, are lines in the direction of $(1/y_1,...,1/y_n)^T$, where $y_i$ is the yield coefficient of the $i$th nutrient. Using a vector Lyapunov function, we divide the nutrient hyperplane into two regions, $\Omega_0$ and $\Omega_1$. The model predicts that if the initial nutrient concentrations lie in $\Omega_0$ then solutions will approach the faces of $\mathbb{R}_+^n$ before $s_1$ reaches $\overline{s_1}$, and the reactor will fail. If the initial nutrient concentrations lie in $\Omega_1$, then the concentration of $s_1$ may reach $\overline{s_1}$, but successful operation of the reactor may still be limited by other factors. 

In reality, we expect that the initial nutrient concentrations are equal to the nutrient concentrations in the input; i.e. $\vec{s}(0) = \vec{s}^\IN$. If, for any solution with initial nutrient concentration $\vec{s}^\IN$ and positive initial biomass concentration ($x(0)>0$), the threshold concentration of $s_1$ is reached with net positive growth of the biomass, then we can pick a fraction of medium to remove, $r$, so that the reactor will cycle indefinitely. In this case, the solutions converge to a periodic solution, with period equal to the length of one cycle. 

If the model has a periodic solution, the nutrient components of the periodic solution lie along the line through $\vec{s}^\IN$ in the direction of $(1/y_1,...,1/y_n)^T$. The net change in biomass along the periodic orbit, denoted $\mu(r)$, must be positive. For other initial nutrient concentrations in $\Omega_1$, the solutions may converge to the periodic solution. However, there is a minimum concentration of biomass, $X$, that is dependent on the initial nutrient concentrations, required for the successful operation of the reactor. If the initial biomass concentration is higher than $X$, then the reactor will cycle indefinitely and solutions will approach the periodic solution. If the initial biomass concentrations are less than $X$, then the reactor will fail after a finite number of cycles. If the model does not have a periodic solution, then the reactor will either fail after a finite number of cycles or it will cycle indefinitely, but the time each cycle takes will grow larger and larger, approaching infinity. 

The model presented here can be thought of as an extension of the single resource model developed in Smith and Wolkowicz \cite{Smith2001}. In that model, it was shown that, when a periodic orbit exists, the reactor will either cycle indefinitely or the reactor will fail without reaching the threshold concentration of $s_1$. We have shown that if there are more essential limiting nutrients but only one is used for the decanting criteria, then the reactor may fail after many cycles, even if the system has a periodic solution. An example of failure after 4 cycles is shown in \cref{fig:minx}. This may offer an explanation for failure of the reactor when the analysis of the single resource model suggests the reactor should operate successfully. 

\section*{Acknowledgements}
The research of Gail S.K. Wolkowicz is supported by the Natural Sciences and Engineering Research Council Discovery Grant \# 9358 and Accelerator supplement.

\bibliographystyle{plain}
\bibliography{main}

\begin{thebibliography}{10}

\bibitem{Bader1978}
F.~Bader.
\newblock Analysis of double-substrate limited growth.
\newblock {\em Biotechnol. and bioeng.}, 20(2):183--202, 1978.

\bibitem{Bauinov1995}
D.D. Ba\u{i}nov and P.S. Simeonov.
\newblock {\em Impulsive differential equations}, volume~28 of {\em Series on
  Advances in Mathematics for Applied Sciences}.
\newblock World Scientific Publishing Co., Inc., River Edge, NJ, 1995.
\newblock Asymptotic properties of the solutions, Translated from the Bulgarian
  manuscript by V. Covachev [V. Khr. Kovachev].

\bibitem{Cordova2014}
F.~{C{\'o}rdova-Lepe}, R.D. {Valle}, and G.~{Robledo}.
\newblock Stability analysis of a self-cycling fermentation model with
  state-dependent impulse times.
\newblock {\em Mathematical methods in the applied sciences}, 37:1460--1475,
  2014.

\bibitem{Fan2007}
G.~Fan and G.S.K. Wolkowicz.
\newblock Analysis of a model of nutrient driven self-cycling fermentation
  allowing unimodal response functions.
\newblock {\em Discrete and continuous dynamical systems}, 8(4):801--831, 2007.

\bibitem{Hsu2019}
T.-H. Hsu, T.~Meadows, L.~Wang, and G.S.K. Wolkowicz.
\newblock Growth on two limiting essential nutrients in a self-cycling
  fermentor.
\newblock {\em Math. BioSci. Eng.}, 16:78--100, 2019.

\bibitem{Hughes1996}
S.M. Hughes and D.G. Cooper.
\newblock Biodegradation of phenol using the self-cycling fermentation process.
\newblock {\em Biotechnology and Bioengineering}, 51:112--119, 1996.

\bibitem{laureni2016}
M.~Laureni, P.~Fal{\aa}s, O.~Robin, A.~Wick, D.G. Weissbrodt, J.L. Nielsen,
  T.A. Ternes, E.~Morgenroth, and A.~Joss.
\newblock Mainstream partial nitritation and anammox: long-term process
  stability and effluent quality at low temperatures.
\newblock {\em Water Research}, 101:628--639, 2016.

\bibitem{Samoilenko1995}
A.~Samoilenko and N.A. Perestyuk.
\newblock {\em Impulsive differential equations}.
\newblock World Scientific, Singapore, 1995.

\bibitem{Sarkis1994}
B.E. Sarkas and D.G. Cooper.
\newblock Biodegradation of aromatic compounds in a self-cycling fermenter.
\newblock {\em The Canadian Journal of Chemical Engineering}, 72(5):874--880,
  1994.

\bibitem{Sauvageau2010a}
D.~Sauvageau and D.~G. Cooper.
\newblock Two-stage, self-cycling process for the production of bacteriophages.
\newblock {\em Microbial Cell Factories}, 9(81), 2010.

\bibitem{Sauvageau2010}
D.~Sauvageau, Z.~Storms, and D.~G. Cooper.
\newblock Sychronized populations of \textit{Escherichia coli} using simplified
  self-cycling fermentation.
\newblock {\em Journal of Biotechnology}, 149:67--73, 2010.

\bibitem{Smith2001}
R.~J. Smith and G.S.K. Wolkowicz.
\newblock Analysis of a model of the nutrient driven self-cycling fermentation
  process.
\newblock {\em Dynamics of continuous, discrete and impulsive systems},
  11:239--265, 2004.

\bibitem{Storms2012}
Z.J. Storms, T.~Brown, D.~Sauvageau, and D.G. Cooper.
\newblock Self-cycling operation increases productivity of recombinent protein
  in \textit{Escherichia Coli}.
\newblock {\em Biotechnology and Bioengineering}, 109(9):2262--2270, 2012.

\bibitem{Strous1998}
M.~Strous, J.J. Heijnen, J.G. Kuenen, and M.S.M. Jetten.
\newblock The sequencing batch reactor as a powerful tool for the study of
  slowly growing anaerobic ammonium-oxidizing microorganisms.
\newblock {\em Appl. Microbiol. Biotechnol.}, 50:589--596, 1998.

\bibitem{Sun2011}
K.~Sun, Y.~Tian, L.~Chen, and A.~Kasperski.
\newblock Universal modelling and qualitative analysis of an impulsive
  bioprocess.
\newblock {\em Computers \& Chemical Engineering}, 35(3):492 -- 501, 2011.

\bibitem{Tilman1982}
D.~Tilman.
\newblock {\em Resource competition and community structure}.
\newblock Princeton University Press, New Jersey, 1982.

\bibitem{Liebig1840}
J.~Von~Liebig.
\newblock {\em Die organische Chemie in ihrer Anwendung auf Agrikultur und
  Physiologie}.
\newblock Friedrich Vieweg, Braunschweig, 1840.

\bibitem{Wang2017}
M.~Wang, J.and~Chae, D.~Sauvageau, and D.~C. Bressler.
\newblock Improving ethanol productivity through self-cycling fermentation of
  yeast: a proof of concept.
\newblock {\em Biotechnology for biofuels}, 10(1):193, 2017.

\bibitem{Wincure1995}
B.M. Wincure, D.G. Cooper, and A.~Rey.
\newblock Mathematical model of self-cycling fermentation.
\newblock {\em Biotechnol. Bioeng.}, 46(2):180--183, 1995.

\end{thebibliography}
\end{document}